\theoremstyle{plain}
\newtheorem{lem}{Lemma}[section]
\newtheorem{cor}[lem]{Corollary}
\newtheorem{prop}[lem]{Proposition}
\theoremstyle{definition}
\newtheorem{ex}[lem]{Example}
\newtheorem{rem}[lem]{Remark}
\newtheorem*{thmmain}{Theorem}
\newtheorem*{exmain}{Example}
\newtheorem*{remmain}{Remarks}
\newcommand\ZZ{\mathbb{Z}} % integers
\newcommand{\BF}{\bar{F}} % separable closure
\newcommand\OO{\mathcal{O}} % structure sheaf
\newcommand\PP{\mathbb{P}} % projective space
\newcommand{\Aa}{\mathcal{A}} % algebra sheaf
\newcommand{\II}{\mathcal{I}} % tautological bundle sheaf
\newcommand{\FF}{\mathcal{F}} % bundle
\newcommand{\id}{\mathrm{id}} % identity
\newcommand{\res}{\mathrm{res}} % restriction
\newcommand{\Ga}{\Gamma} % Galois group
\newcommand{\LL}{\mathcal{L}} % line bundle
\newcommand\MM{\operatorname{M}} % motive
\newcommand\Spec{\operatorname{Spec}} 
\newcommand{\Gal}{\operatorname{Gal}}
\newcommand{\PGL}{\operatorname{PGL}}
\newcommand{\SB}{\operatorname{SB}}
\newcommand{\Aut}{\operatorname{Aut}}
\newcommand{\Hom}{\operatorname{Hom}}
\newcommand{\CH}{\operatorname{CH}}
\newcommand{\End}{\operatorname{End}}
\newcommand{\Sym}{\operatorname{Sym}}
\newcommand{\Tan}{\operatorname{Tan}}
\newcommand{\Span}{\operatorname{span}}
\newcommand{\Gr}{\operatorname{Gr}}
\newcommand{\cancel}[1]{\,\not{\!#1}}
\def\pentagon{{\setlength{\unitlength}{0.5pc}
\begin{picture}(2,2)
\put(0.5,-.2){\circle*{0.5}}
\put(1.5,-.2){\circle*{0.5}}
\put(1.81,0.75){\circle*{0.5}}
\put(0.19,0.75){\circle*{0.5}}
\put(1,1.34){\circle*{0.5}}
\qbezier(0.5,-.2)(0.5,-.2)(1.5,-.2)
\qbezier(0.5,-.2)(0.5,-.2)(0.19,0.75)
\qbezier(1.5,-.2)(1.5,-.2)(1.81,0.75)
\qbezier(1,1.34)(1,1.33)(0.19,0.75)
\qbezier(1,1.34)(1,1.33)(1.81,0.75)
\end{picture}}}
\author[R.~Xiong]{Rui Xiong}
\address[Rui Xiong]{Department of Mathematics and Statistics, University of Ottawa, 150 Louis-Pasteur, Ottawa, ON, K1N 6N5, Canada}
\email{rxiong@uottawa.ca}
\urladdr{https://cubicbear.github.io}
\author[K.~Zainoulline]{Kirill Zainoulline}
\address[Kirill Zainoulline]{Department of Mathematics and Statistics, University of Ottawa, 150 Louis-Pasteur, Ottawa, ON, K1N 6N5, Canada}
\email{kirill@uottawa.ca}
\urladdr{https://kirillmath.ca}
\begin{document}

\title{Motivic Lefschetz theorem for twisted Milnor hypersurfaces}

\begin{abstract}
We show that the Grothendieck-Chow motive of a smooth hyperplane section $Y$ of an inner twisted form $X$ of a Milnor hypersurface splits as a direct sum of shifted copies of the motive of the Severi-Brauer variety of the associated cyclic algebra $A$ and the motive of its maximal commutative subfield $L\subset A$. The proof is based on the non-triviality of the (monodromy) Galois action on the equivariant Chow group of $Y_L$.  
\end{abstract}

\thanks{Both authors were partially supported by NSERC Discovery grant RGPIN-2022-03060}

\maketitle

%%%%%%%%%%%%%%%%%%%%%%%%
\section{Introduction}

Milnor hypersurfaces, their twisted forms and hyperplane sections appear in many different contexts and provide an important source of examples of algebraic varieties in various areas of mathematics. In the present notes we focus on the study of their Grothendieck-Chow motives \cite{Ma68} using the (monodromy) action of the Galois group on the respective equivariant Chow groups \cite{To97,Br97}. 

\subsection{\it Twisted Milnor hypersurfaces}
Following \cite[2.5.3]{LM07} the Milnor hypersurface $X_0$ of dimension $2n-3$, $n\ge 3$ over a field $F$ is a zero divisor of the line bundle  $\OO(1)\boxtimes \OO(1)$ on the product of projective spaces $\PP^{n-1}\times\PP^{n-1}$ or, equivalently, it is the flag (incidence) variety 
\[
X_0=\{V_1\subset V_{n-1}\subset F^{n} \},\; \text{ where } \dim_F V_i=i.
\]
There is another description of $X_0$ in terms of right ideals \cite[\S1B]{KMRT}:
\[
X_0=\{I_1\subset I_{n-1}\subset \End(F^n)\},\;\text{ where } \dim_F I_i=ni.
\]
The identification is given by  $(V_1\subset V_{n-1})\mapsto (I_1\subset I_{n-1})$ with $I_i=\Hom(F^n,V_i)\subset \End(F^n)$.

Using the Bruhat decomposition one can show that the Grothendieck-Chow motive of $X_0$ splits as a direct sum of (shifted) Tate motives \cite{Ko91}:
\[
\MM(X_0)=\bigoplus_{i=0}^{n-2}\big(\ZZ(i)\oplus \ZZ(2n-3-i)\big)^{\oplus (i+1)},\;
\text{ where }\ZZ(i)=\ZZ(1)^{\otimes i}\text{ and } \ZZ(0)=\MM(\Spec F).
\]

Let $X$ be a projective homogeneous $G$-variety over $F$, where $G$ is a linear algebraic group over $F$ such that $G_{\BF}\simeq \PGL_n$ and $X_{\BF}\simeq (X_0)_{\BF}$, where $X_{\BF}=X\!\times_{\Spec F}\Spec \BF$ denotes the base change to the separable closure $\BF$ of $F$. We call $X$ a twisted Milnor hypersurface.

Such $X$ provides an example of a $G$-flag variety in the sense of \cite[\S1]{MPW1} or the so-called twisted flag variety.  The Galois group $\Ga=\Gal(\BF/F)$ acts naturally on $G(\BF)$, its maximal torus  and the respective Weyl group $W$ via the so-called $\star$-action (see \cite{CM06,MPW1}). The kernel of this $\star$-action defines the smallest field extension $F_G/F$ such  that $G_{F_G}$ is of inner type (it has no non-split quasi-split forms). There are two basic families of twisted Milnor hypersurfaces: the inner hypersurfaces ($F_G=F$) and the quasi-split hypersurfaces ($G$ is quasi-split).

For the study of inner hypersurfaces we refer to \cite[\S2]{CPSZ}). In this case $G=\Aut_F(A)$ for some central simple algebra $A$ over $F$, and by \cite[Cor.2.3]{CPSZ} the hypersurface $X$ can be identified  with a certain Grassmann bundle over the Severi-Brauer variety $\SB(A)$. Its motive $\MM(X)$ then splits as a direct sum of shifted copies of the motive $\MM(\SB(A))$. For properties of the latter see \cite{Ka00,Ka96}. Observe that both $\MM(X)$ and $\MM(\SB(A))$ do not contain Artin motives (motives of non-trivial field extensions) as direct summands. 

In the quasi-split case, $\Gal(F_G/F)\simeq \ZZ/2\ZZ$ is generated by a non-trivial automorphism of the Dynkin diagram, and the variety $X$ is a Hermitian quadric over $F_G$, i.e., a variety of isotropic lines in the respective Hermitian space (see \cite{SZ10}). By \cite[Thm.~C]{SZ10}, the zero-dimensional part of $\MM(X)$ consists of (shifted) motives of $\Spec F_G$. 

For further properties and examples of non-inner and non-quasi-split hypersurfaces (unitary projective spaces corresponding to algebras with involutions of the second kind), we refer to \cite{Ka12}.
Observe that all these results are in line with the main result of \cite{CM06}, which implies that the zero-dimensional part of the motive of a $G$-flag variety is generated by motives of subfields of $F_G$ containing $F$. 

\subsection{\it Hyperplane sections} 
In our paper we consider only inner hypersurfaces. From this point on we assume $F$ contains a primitive $n$-th root of unity $\zeta$, $n\in F^\times$, and $X$ is an inner hypersurface over $F$.

We introduce our main object, a smooth hyperplane section $Y$ of $X$ as follows. We choose ${a}\in F^\times$ so that $L=F[\sqrt[n]{{a}}]$ is a cyclic Galois extension of degree $n$. Given $b\in F^\times$ we consider a cyclic algebra 
\[
A=F\langle u,v\rangle\big/\left<
u^n={a},\, v^n={b},\,uv=\zeta vu\right>.
\]
This is a central simple algebra over $F$ of degree $n$ which splits over $L$. The $G$-flag variety $X$ for $G=\Aut_F(A)$ is then given by
\[
X=\left\{I_1\subset I_{n-1}\subset A
\colon 
\text{$I_i$ is a right ideal of $A$ of reduced dimension $i$}
\right\}.
\]
We define its smooth hyperplane section $Y$ to be
\[
Y=\left\{(I_1\subset I_{n-1})\in X\colon u I_1\subset I_{n-1}\right\}.
\]
Observe that $Y$ is not a twisted flag variety (it is not a twisted form of a projective homogeneous variety). Our major result says that (up to `phantoms', i.e., up to motives $M$ such that $M_L=0$)

\begin{thmmain}
$\MM(Y) =
\MM(\SB(A))\oplus 
\MM(\SB(A))(1)\oplus \cdots \oplus \MM(\SB(A))(n-3)\oplus \MM(\Spec L)(n-2)$.
\end{thmmain}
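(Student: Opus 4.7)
The plan centers on the projection $\pi\colon Y\to \SB(A)$, $(I_1\subset I_{n-1})\mapsto I_1$, together with the closed subscheme $E\subset \SB(A)$ parametrizing right ideals fixed by left multiplication by $u$. Over $L$ the element $u$ has $n$ distinct eigenlines in the splitting module of $A_L$, cyclically permuted by $\Gal(L/F)$, so $E\simeq \Spec L$ as an $F$-scheme. The fiber of $\pi$ is a $\PP^{n-3}$ over $I_1\notin E$ (since $I_{n-1}$ must contain the reduced-dimension-$2$ ideal $I_1+uI_1$) and is the full $\PP^{n-2}$ over $I_1\in E$ (where the constraint $uI_1\subset I_{n-1}$ becomes vacuous). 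Thus $Y$ is generically a $\PP^{n-3}$-bundle over $\SB(A)$ that degenerates to a $\PP^{n-2}$-bundle over $E$.

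Passing to the open/closed decomposition $Y=Y^{\circ}\sqcup Y_E$ with $Y_E:=\pi^{-1}(E)$, the projective bundle theorem applied to each piece, combined with the Gysin triangles for the smooth pairs $(\SB(A),E)$ of codimension $n-1$ and $(Y,Y_E)$ of codimension $n-2$, yields in the Grothendieck group of Chow motives the identity
\[
[\MM(Y)]=\sum_{d=0}^{n-3}[\MM(\SB(A))(d)] + [\MM(\Spec L)(n-2)],
\]
because the Artin-Tate pieces $\MM(\Spec L)(n-1+d)$ contributed by the Gysin map of $E\hookrightarrow \SB(A)$ cancel all but one of the pieces $\MM(\Spec L)(d+n-2)$ coming from the $\PP^{n-2}$-bundle $Y_E\to E$.

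To upgrade this numerical identity to an actual isomorphism of Chow motives, the plan is to use the monodromy action on the equivariant Chow group. Over $L$, the torus $T\subset \PGL(A_L)$ centralizing $u$ acts on $Y_L$ with isolated fixed points $p_{ij}=(\langle e_i\rangle,\langle e_k:k\ne j\rangle)$ indexed by pairs $i\ne j$, and $\Gal(L/F)$ acts on the character lattice of $T$ and on this fixed-point set via cyclic permutation of the $u$-eigenbasis $\{e_i\}$. Using the localization theorem of Brion--Totaro for $\CH^*_T(Y_L)$ together with the non-triviality of this monodromy, one constructs a Galois-invariant family of mutually orthogonal idempotents in the equivariant Chow ring of $Y_L\times Y_L$, one for each summand in the target decomposition. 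By Rost nilpotence, applicable since $Y_L$ is cellular, these descend to orthogonal projectors in $\CH^{\dim Y}(Y\times Y)$ realizing the desired direct summands.

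The main obstacle will be identifying each cut-out summand with the correct motive: showing that the summand corresponding to shift $d\in\{0,\dots,n-3\}$ is isomorphic to $\MM(\SB(A))(d)$, and that the remaining summand is $\MM(\Spec L)(n-2)$, rather than merely a motive with the right split form. For the Severi-Brauer part, one expects to construct explicit $F$-rational correspondences $Y\rightleftarrows \SB(A)$ via $\pi$ and the pullback of the tautological bundle, compatible with the equivariant idempotents, and then to appeal to Karpenko's rigidity theorems for the motive of $\SB(A)$ \cite{Ka00,Ka96}. For the Artin summand, the identification will follow from the Galois module structure of that summand together with standard descent.
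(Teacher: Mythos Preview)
The genuine gap is your invocation of Rost nilpotence to descend Galois-invariant idempotents from $\CH(Y_L\times_L Y_L)$ to $\CH(Y\times Y)$. You justify this by ``$Y_L$ is cellular,'' but cellularity over $L$ does not give Rost nilpotence for $Y$ over $F$: Brosnan's criterion \cite{Br05} requires a Bia{\l}ynicki--Birula decomposition over the base field, and $Y$ is not a twisted flag variety. The paper is explicit about this (Remark~2 of the introduction): the theorem is stated only ``up to phantoms, i.e., up to motives $M$ such that $M_L=0$'' precisely because Rost nilpotence is \emph{not known} for these varieties. Your open/closed stratification of $\pi\colon Y\to\SB(A)$ is correct and yields the identity in the Grothendieck group of motives, but upgrading it to an actual decomposition by the descent-plus-nilpotence mechanism you describe is unjustified, and the ``standard descent'' you invoke for the Artin summand has the same problem.

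The paper avoids descent entirely by constructing every projector over $F$ from the outset. For the Severi--Brauer part it restricts Manin's projective-bundle idempotents for $X=\PP(\FF^\vee)\to\SB(A)$ along $\imath\colon Y\hookrightarrow X$ (using $[Y]=c_1(\LL)$), obtaining $\overline{p}_0,\dots,\overline{p}_{n-3}$ together with explicit correspondences $\overline{f}_{i+1},\overline{g}_i$ to and from $\SB(A)$; the identification of summands is then automatic, with no appeal to Karpenko's rigidity. For the Artin part the key move is that $\CH^{n-2}(Y\times_F\Spec L)=\CH^{n-2}(Y_L)$, so a single class $\overline{\gamma}_1\in\CH^{n-2}(Y_L)$ is \emph{already} a correspondence $f\colon\MM(Y)\to\MM(\Spec L)(n-2)$ defined over $F$. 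The monodromy/localization computation is used not to build idempotents over $L$ but merely to verify the pairing identity $\langle\gamma_1,\sigma\gamma_1\rangle_{Y_L}=\pm\delta_{\id,\sigma}$, which (Corollary~3.2) is equivalent to $f\circ g=\Delta_{\Spec L}$; hence $p=g\circ f$ is an honest idempotent over $F$ with $(Y,p)\simeq\MM(\Spec L)(n-2)$. Orthogonality of $p$ to the $\overline{p}_i$ is checked directly, and the remainder $\overline{p}-p$ is only shown to vanish over $L$ --- that residual phantom is exactly the ambiguity the theorem allows.
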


\begin{exmain}
The following is the diagram of motivic decomposition of $X$ and $Y$ for $n=5$. 
\[
\begin{array}{c}
\xymatrix@!=0pc{
&&&\bullet\ar@{-}[r]&\bullet\ar@{-}[dr]\\
&&\bullet\ar@{-}[r]&\bullet\ar@{-}[r]&\bullet\ar@{-}[dr]&\bullet\ar@{-}[dr]\\
&\bullet\ar@{-}[r]&\bullet\ar@{-}[r]&\bullet\ar@{-}[r]&\bullet\ar@{-}[dr]&\bullet\ar@{-}[dr]&\bullet\ar@{-}[dr]
\\
\bullet\ar@{-}[r]&\bullet\ar@{-}[r]&\bullet\ar@{-}[r]&\bullet\ar@{-}[r]&\bullet&\bullet&\bullet&\bullet
}\\\\
\MM(X)
\end{array}
\qquad 
\begin{array}{c}
\xymatrix@!=0pc{
&&&
\pentagon
&\\
&&\bullet\ar@{-}[r]&\bullet\ar@{-}[r]&\bullet\ar@{-}[dr]\\
&\bullet\ar@{-}[r]&\bullet\ar@{-}[r]&\bullet\ar@{-}[r]&\bullet\ar@{-}[dr]&\bullet\ar@{-}[dr]
\\
\bullet\ar@{-}[r]&\bullet\ar@{-}[r]&\bullet\ar@{-}[r]&\bullet\ar@{-}[r]&\bullet&\bullet&\bullet
}\\\\
\MM(Y)
\end{array}
\]
Here each chain corresponds to the motive of Severi-Brauer variety $\SB(A)$, and $\pentagon$ corresponds to the Artin motive $\MM(\Spec L)$; each vertex depicts the Tate motive in the decomposition of $\MM(\SB(A))$ over $L$. 
\end{exmain}

\begin{remmain}
1. This result looks quite surprising, as contrary to $Y$ the motive of $X$ does not contain Artin motives. 
Moreover, if $A$ splits (e.g. $b\in \operatorname{Nrd}(A)$), then $\MM(Y)$ consists of Tate motives and a unique Artin motive in the middle dimension:
\[
\MM(Y)=\big(\!\oplus_*\!\ZZ(*)\big)\oplus \MM(\Spec L)(n-2).
\]

2. The main result of \cite{Se08} provides an example of motivic decomposition of a hyperplane section of generalized Severi-Brauer variety $\SB_3(A)$, where $A$ has degree $6$. This decomposition also does not contain Artin motives. Observe that as in \cite{Se08} our result holds modulo `phantoms', since the Rost nilpotence (see e.g. \cite{Br05}) is not known for such varieties.

3. For $n=2$ we just have $Y=\Spec L$. 
For $n=3$ the variety $Y$ is a (twisted) del Pezzo surface of degree 6 (see \cite{Bl10}). 
In this case, our decomposition $\MM(Y)=\MM(\SB(A)) \oplus \MM(\Spec L)(1)$ can be also seen from Manin's blow-up formula \cite[\S9]{Ma68} (which only works for $n=3$). If $A$ splits, this is also a particular case studied in \cite{Gi15}.

4. Using the Artin-Schreier theory, one can also extend it to perfect fields of positive characteristic $p$ and $\deg A=p$.
\end{remmain}

The proof and the paper is organized as follows. In section~\ref{sec:SB} we first discuss properties of the hypersurface $X$ and its hyperplane section $Y$. Then using the classical motivic projective bundle theorem of \cite[\S7]{Ma68} and the results of \cite[\S2]{CPSZ} we split off the Severi-Brauer parts \eqref{eq:MX} and \eqref{eq:MYE}. In the next section~\ref{sec:Galois} we introduce our key tool -- a non-trivial action of the Galois group on the equivariant Chow group $\CH^{n-2}_{T_L}(Y_L)$ (we call it a monodromy action). By \cite{BP22} since $X_0$ is the adjoint variety for $\PGL_n$, the section $Y_L$ is $T_L$-stable, where $T$ is a maximal torus of $\PGL_n$. This allows us to describe the equivariant Chow groups of $Y_{L}$ and the monodromy action (see Proposition~\ref{thm:mainmon}) in terms of the $T$-fixed point locus of $X_0$. Finally, in section~\ref{sec:Artin} using the monodromy action and the localization techniques of \cite{BP22} and \cite{Br97}
we construct an explicit idempotent defining the Artin motive (see Proposition~\ref{prop:art}).

%%%%%%%%%%%%%%%%%%%%%%%%

\section{The Severi-Brauer part}\label{sec:SB}

\subsection{\it The motive of the inner Milnor hypersurface}
It was proven in \cite[Cor.2.3]{CPSZ} that the motive of inner hypersurface $X$ decomposes into shifted copies of the motive of Severi-Brauer variety $\SB(A)$. Below we describe this splitting explicitly following the motivic projective bundle theorem of~\cite[\S7]{Ma68}.

\smallskip

Consider projection $\pi\colon X\to \SB(A)$ which sends $(I_1\subset I_{n-1})$ to $I_1$. Denote by $\II$ the rank $n$ tautological bundle over $\SB(A)$. Denote by $\Aa$ the constant algebra sheaf over $\SB(A)$. By \cite[Lem.4.4, Lem.4.5, and Rem.4.6]{CPSZ} we can identify $\pi\colon X\to\SB(A)$ with the Grassmann bundle $\Gr(n-2,\FF)\to \SB(A)$, where $\FF = (\Aa/\II) \otimes_\Aa \II^\vee$ is a bundle of rank $n-1$ over $\SB(A)$. After dualizing $\FF$, we may identify $\pi$ with the projective bundle $\PP(\FF^\vee)\to \SB(A)$, where 
\[
\FF^\vee=\II\otimes_\Aa (\Aa/\II)^\vee=\mathcal{H}om_{\Aa}(\Aa/\II,\II).
\]
Observe that $\FF^\vee \times_F \BF=\OO(-1)\otimes (\OO^{\oplus n}/\OO(-1))^\vee$ over $\PP^{n-1}$. Consider a chain of tautological bundles $\II_1\subset \II_{n-1}\subset \Aa_X=\pi^*(\Aa)$ over $X$. Define a line bundle over $X$
\[
\LL=\Aa_X/\II_{n-1}\otimes_{\Aa_X}\II_1^\vee =\mathcal{H}om_{\Aa_X}(\II_1,\Aa_X/\II_{n-1}).
\]

\begin{lem}
We have $\LL=\OO(1)$ over $X= \PP(\FF^\vee)$.
\end{lem}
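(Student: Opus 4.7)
The plan is to unwind the tautological exact sequence on the Grassmann bundle realization of $X$ and identify its quotient line bundle with $\LL$ on the one hand and with $\OO(1)$ on the other. Everything takes place on $X$, where we already have the tautological flag of $\Aa_X$-modules $\II_1\subset\II_{n-1}\subset\Aa_X$, and $\pi$ is the forgetful map $(\II_1\subset\II_{n-1})\mapsto\II_1$ realising $\pi^*\II=\II_1$.

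First I would pull back $\FF$ and rewrite
\[
\pi^*\FF=\pi^*\bigl((\Aa/\II)\otimes_{\Aa}\II^{\vee}\bigr)
=(\Aa_X/\II_1)\otimes_{\Aa_X}\II_1^{\vee}.
\]
By the lemmas of \cite[\S4]{CPSZ} that were used to identify $\pi$ with $\Gr(n-2,\FF)\to\SB(A)$, the tautological rank $(n-2)$ subsheaf $\mathcal{S}\hookrightarrow\pi^*\FF$ of the Grassmann bundle corresponds, at a point $(I_1\subset I_{n-1})$, to the right $A$-submodule $(I_{n-1}/I_1)\otimes_{A}I_1^{\vee}\subset(A/I_1)\otimes_{A}I_1^{\vee}$. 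I would verify this after base change to $\BF$, where $A=\End(V)$, $I_1=V_1\otimes V^{\vee}$, $I_{n-1}=V_{n-1}\otimes V^{\vee}$, and every right $A$-submodule of $(V/V_1)\otimes V^{\vee}$ is of the form $U\otimes V^{\vee}$ for some subspace $U\subset V/V_1$, which matches the identification $\Gr(n-2,\FF)_{\BF}=\Gr(n-2,V/V_1\otimes V_1^{\vee})$ fiber over $V_1$. Globally this gives $\mathcal{S}=(\II_{n-1}/\II_1)\otimes_{\Aa_X}\II_1^{\vee}$.

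Taking the quotient in the tautological sequence
\[
0\to\mathcal{S}\to\pi^*\FF\to Q\to 0
\]
and using the exactness of $(-)\otimes_{\Aa_X}\II_1^{\vee}$ applied to $0\to\II_{n-1}/\II_1\to\Aa_X/\II_1\to\Aa_X/\II_{n-1}\to 0$ yields
\[
Q=(\Aa_X/\II_{n-1})\otimes_{\Aa_X}\II_1^{\vee}=\LL.
\]
Finally I would invoke the standard identification of the Grassmann quotient line bundle with $\OO(1)$ under $\Gr(n-2,\FF)=\PP(\FF^{\vee})$ (lines in $\FF^{\vee}$ correspond to rank-one quotients of $\FF$), concluding $\LL=\OO(1)$.

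The only nonformal step is the second one, identifying the tautological subbundle of the Grassmannian with $(\II_{n-1}/\II_1)\otimes_{\Aa_X}\II_1^{\vee}$. Once one trusts the bundle identification of \cite[\S4]{CPSZ}, this reduces to the bookkeeping carried out on the geometric fiber above; the remaining steps are purely formal manipulations of tautological sequences and tensor products over $\Aa_X$.
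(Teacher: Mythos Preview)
Your proof is correct and follows essentially the same approach as the paper: the paper asserts directly that the tautological subbundle of $\Gr(n-2,\FF)$ is $\II_{n-1}/\II_1\otimes_{\Aa_X}\II_1^\vee$ and the quotient bundle is $\Aa_X/\II_{n-1}\otimes_{\Aa_X}\II_1^\vee=\LL$, then cites \cite[Ex.~14.6.5]{Fu98} for the identification of the latter with $\OO(1)$ on $\PP(\FF^\vee)$. Your version simply unpacks these identifications in more detail, verifying the tautological subbundle on geometric fibers and spelling out the duality $\Gr(n-2,\FF)\cong\PP(\FF^\vee)$ that sends the rank-one quotient to $\OO(1)$.
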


\begin{proof}
The tautological bundle over $\Gr(n-2,\FF)$ is $\II_{n-1}/\II_1\otimes_{\Aa_X}\II_1^\vee$ and the quotient bundle is given by $\Aa_X/\II_{n-1}\otimes_{\Aa_X} \II_1^\vee$. The latter then corresponds to the bundle $\OO(1)$ over $\PP(\FF^\vee)$ by \cite[Ex.14.6.5]{Fu98}.
\end{proof}
 
Consider the first Chern class $H=c_1(\LL)\in \CH^1(X)$ in the Chow group of algebraic cycles modulo rational equivalence relation (see e.g. \cite{Fu98}). By the projective bundle theorem, $\CH(X)$ is freely generated by the classes $1,H,\ldots,H^{n-2}$ as a $\CH(\SB(A))$-module (via $\pi^*$). In other words, for any $\gamma\in \CH^m(X)$ we have a presentation 
\[
\gamma=\pi^*(\gamma_0)+\pi^*(\gamma_1)H+ \ldots+ \pi^*(\gamma_{n-2})H^{n-2}\;\text{ for unique }\gamma_i\in  \CH^{m-i}(\SB(A)).
\]

Following \cite[\S7]{Ma68} (for $H=x$, $n-2=r$, $f_i=g_i$ loc.cit.) we first construct correspondences 
\[
f_i \in \CH^{2n-3-i}(X\times \SB(A))=\Hom^{-i}(\MM(X),\MM(\SB(A))),\; i=0,\ldots, n-2
\] 
by downward induction on $i$. We set $f_{n-2} \in \CH^{n-1}(X\times \SB(A))$ to be the transposed class of the graph of $\pi$, and if $f_{i+1},f_{i+2},\ldots,f_{n-2}$ are already defined we set
\[
f_i= f_{n-2}\circ c_{H^{n-2-i}}\circ \Big(\Delta_X - \sum_{k=i+1}^{n-2} c_{H^k}\circ f_k\Big),\;\text{ where }c_\gamma=\Delta_{X*}(\gamma)\in \CH^{2n-3+m}(X\times X).
\]
Such correspondences satisfy $f_i\circ \gamma=\gamma_i$ for each $i$. We then set 
\[
g_i=c_{H^i}\circ f_{n-2}^t \in \CH^{n-1+i}(\SB(A)\times X)=\Hom^i(\MM(\SB(A)),\MM(X)), \; i=0,\ldots,n-2.
\]  
By definition, $f_i\circ g_j=\delta_{i,j}\Delta_{\SB(A)}$ (here $\delta_{i,j}$ is the Kronecker symbol). The correspondences 
\[
p_{n-2-i}=g_i \circ f_i,\quad 0\le i \le n-2
\] 
form a complete system of pair-wise orthogonal idempotents which gives the direct sum decomposition of \cite[Cor.2.3]{CPSZ}:
\begin{equation}\label{eq:MX}
\MM(X)=\oplus_{i=0}^{n-2} (X,p_i)=\MM(\SB(A))\oplus \MM(\SB(A))(1)\oplus \ldots \oplus \MM(\SB(A))(n-2).
\end{equation}

%%%%%%%%%%%%%%%%%%%%%%%%%%%%%%%

\subsection{\it The Severi-Brauer part of the hyperplane section}
Using properties of the first Chern class we show that the complete system of idempotents defining~\eqref{eq:MX} can be restricted
to an incomplete system of idempotents for $Y$, hence, providing a partial motivic decomposition~\eqref{eq:MYE}.

\begin{lem}
The variety $Y=\left\{(I_1\subset I_{n-1})\in X \colon u I_1\subset I_{n-1}\right\}$ is smooth. 
\end{lem}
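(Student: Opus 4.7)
The plan is to verify smoothness directly by the Jacobian criterion after passing to the separable closure. Since smoothness is geometric, it suffices to check that $Y_{\BF}$ is smooth. Fix a splitting $A_{\BF} \simeq \End(\BF^n)$ under which $u\in A$ becomes a linear endomorphism $U$ of $\BF^n$, and identify $X_{\BF} \subset \PP^{n-1}\times\PP^{n-1}$ by writing $V_1 = [x]$ and $V_{n-1} = \ker(y^T{\cdot}\,)$. The subvariety $Y_{\BF}$ is then cut out by the two bilinear equations
\[
f(x,y) = y^T x = 0, \qquad g(x,y) = y^T U x = 0.
\]

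The crucial input is that $U$ has $n$ \emph{distinct} eigenvalues. Indeed, $u^n = {a}$ gives $U^n = {a}\cdot I$, so the minimal polynomial of $U$ divides $t^n - {a}$, whose $n$ roots $\zeta^k\sqrt[n]{{a}}$ are pairwise distinct. Moreover, the relation $uv = \zeta vu$ forces $U$ and $\zeta U$ to be conjugate, so the eigenvalue set of $U$ is stable under multiplication by $\zeta$ and must coincide with the full set of roots. Choose coordinates in which $U = \operatorname{diag}(\lambda_0,\dots,\lambda_{n-1})$, so that $g = \sum_i \lambda_i x_i y_i$.

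It then remains to verify that the Jacobian of $(f,g)$ has rank $2$ at every point of $Y_{\BF}$ on each standard affine chart $\{x_k = y_l = 1\}$ of $\PP^{n-1}\times\PP^{n-1}$. All its $2\times 2$ minors take the form $(\lambda_i-\lambda_j)\cdot(\text{monomial in }x,y)$; on the off-diagonal charts ($k\neq l$), the minor from the $x_l$ and $y_k$ columns equals $\lambda_k-\lambda_l\neq 0$, so smoothness is immediate. The main obstacle is the diagonal chart $\{x_k = y_k = 1\}$, where the most obvious minor (from the matched columns $x_i, y_i$) vanishes for free. A short case analysis shows that the simultaneous vanishing of \emph{all} $2\times 2$ minors on this chart would force $x$ and $y$ to be supported on two matching indices $\{k,m\}$; substituting into $f=g=0$ then yields $1+x_m y_m = 0$ and $\lambda_k+\lambda_m x_m y_m = 0$, whence $\lambda_k=\lambda_m$, contradicting the distinctness of the eigenvalues. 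Hence $Y_{\BF}$ has no singular points.
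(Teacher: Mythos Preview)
Your proof is correct and follows essentially the same strategy as the paper: pass to $\BF$, diagonalize $u$ using that $u^n=a$ has distinct roots, and check the Jacobian of the two bilinear equations has rank~$2$. The only difference is in packaging: the paper works on the affine cone $\{(x,y):x\neq 0,\,y\neq 0,\,xy^t=xuy^t=0\}$, where all $2n$ partial derivatives are available, and observes that $x\neq 0$, $y\neq 0$, $xy^t=0$ force the existence of indices $i\neq j$ with $x_i\neq 0$ and $y_j\neq 0$; the corresponding $2\times 2$ minor is $(\lambda_j-\lambda_i)x_iy_j\neq 0$, and smoothness of the cone descends to $Y$ via the $\mathbb{G}_m\times\mathbb{G}_m$-bundle. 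Your chart-by-chart argument reaches the same conclusion but at the cost of the extra case analysis on diagonal charts; the cone trick is what lets the paper avoid that.
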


\begin{proof}
It suffices to check this over $\BF$. By the very definition we have
\[
Y(\BF)=\{(x,y)\in \BF^n\times \BF^n\colon x\neq 0,\, y\neq 0,\, x u y^t=xy^t=0\}/\BF^\times \times \BF^\times,
\]
where $x$ and $y$ are row vectors, and $u$ is a $n\times n$-matrix. Note that the roots $\zeta^i\sqrt[n]{a}$ of $z^n-{a}$ give distinct eigenvalues of $u$. After canceling $\sqrt[n]{a}$, conditions $x u y^t=xy^t=0$ turn into
\[
\begin{cases}
    x_1y_1+x_2y_2+\cdots+x_ny_n = 0\\
    x_1y_1+\zeta x_2y_2+\cdots+\zeta^{n-1}x_ny_n =0.
\end{cases}
\]
Consider the Jacobi matrix 
\[
\begin{bmatrix}
y_1 & x_1 & y_2 & x_2 & \cdots&\cdots & y_n & x_n\\
y_1 & x_1 & \zeta y_2 & \zeta x_2 &\cdots &\cdots & \zeta^{n-1}y_n & \zeta^{n-1} x_n
\end{bmatrix}.
\]
Since $x\neq 0$, $y\neq 0$ and $xy^t=0$, there exist $x_i\neq 0$ and $y_j\neq 0$ for $i\neq j$. Since  submatrix 
\[
\begin{bmatrix}
x_i & y_j\\
\zeta^{i-1} x_i & \zeta^{j-1} y_j
\end{bmatrix}
\]
has rank 2, then so is the Jacobi matrix. Hence, the variety
\[
\{(x,y)\in \BF^n\times \BF^n\colon x\neq 0,\, y\neq 0,\, x u y^t=xy^t=0\}
\]
is smooth. Since it is a $\mathbb{G}_m\times \mathbb{G}_m$-principal bundle over $Y$, $Y$ is also smooth.
\end{proof}

\begin{lem} 
We have $[Y]=H$ in $\CH^1(X)$. 
\end{lem}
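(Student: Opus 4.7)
The plan is to exhibit $Y$ as the vanishing locus of a canonical global section of the line bundle $\LL$, so that the identity $[Y]=c_1(\LL)=H$ falls out of the standard cycle-theoretic interpretation of the first Chern class.

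First, I would observe that left multiplication by $u \in A$ defines a morphism of right $\Aa_X$-modules $\mu_u \colon \Aa_X \to \Aa_X$. Precomposing with the inclusion $\II_1 \hookrightarrow \Aa_X$ and post-composing with the quotient $\Aa_X \twoheadrightarrow \Aa_X/\II_{n-1}$ produces a morphism of right $\Aa_X$-modules $\II_1 \to \Aa_X/\II_{n-1}$, i.e.\ a global section
\[
s_u \in \Gamma\bigl(X,\, \mathcal{H}om_{\Aa_X}(\II_1, \Aa_X/\II_{n-1})\bigr) = \Gamma(X,\LL).
\]
By the very construction, the fibre-wise vanishing locus of $s_u$ is precisely the subscheme $\{(I_1 \subset I_{n-1}) : uI_1 \subset I_{n-1}\} = Y$.

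Next, I would verify that $s_u$ is a regular (non-zero) section. It suffices to check this after base change to $\BF$, where $u$ becomes a matrix with $n$ distinct eigenvalues $\zeta^i\sqrt[n]{a}$. A generic flag $(V_1 \subset V_{n-1})$ in $X_0$ does not satisfy $uV_1 \subset V_{n-1}$ (take $V_1$ not an eigenline of $u$), so $s_u$ does not vanish identically on any component. Equivalently, the zero scheme of $s_u$ has pure codimension $1$ in $X$, which is confirmed by the previous lemma: $Y$ is smooth of dimension $2n-4 = \dim X - 1$.

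Finally, combining these two points: $Y$ is the scheme-theoretic zero locus of a regular section of a line bundle, and both $Y$ and the zero scheme $Z(s_u)$ are reduced of the same pure codimension~$1$. Therefore $[Y] = [Z(s_u)] = c_1(\LL) = H$ in $\CH^1(X)$. The only mildly delicate point is ensuring that the scheme structure on $Y$ given by the definition matches the zero scheme of $s_u$ with multiplicity one; smoothness of $Y$ (already established) and the expected codimension together force this, so no further multiplicity check is needed.
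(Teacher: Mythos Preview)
Your proof is correct and follows exactly the same idea as the paper: the paper's one-line proof simply observes that left multiplication $I_1 \to A \xrightarrow{u\cdot} A \to A/I_{n-1}$ defines a section of $\LL$ (whose zero locus is $Y$). You have spelled out this construction in full and added the verification of regularity and the multiplicity-one check, which the paper leaves implicit.
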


\begin{proof}
The left multiplication $I_1 \to A \xrightarrow{u\cdot} A \to A/I_{n-1}$ defines a zero-section in the line bundle $\LL$.
\end{proof}

Let $\imath\colon Y \hookrightarrow X$ denote the closed embedding. Consider the induced pull-back $\imath^*\colon \CH(X\times \SB(A) \to \CH(Y\times \SB(A))$. For $0\leq i\leq n-2$ we set
\[
\overline{f}_i =\imath^*(f_i) \in \CH^{2n-3-i}(Y\times \SB(A))\quad\text{ and }\quad\overline{g}_i =\imath^*(g_i) \in \CH^{n-1+i}(\SB(A) \times Y).
\]

\begin{lem} 
For $0\leq i,j \leq n-3$ we have $\overline{f}_{i+1}\circ \overline{g}_{j}=\delta_{i,j}\Delta_{\SB(A)}$.
\end{lem}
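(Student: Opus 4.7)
The plan is to reduce the composition $\overline{f}_{i+1}\circ\overline{g}_j$ over $Y$ to a composition $f_{i+1}\circ g_{j+1}$ over $X$ by using the identity $[Y]=H$ proved in the previous lemma, and then invoke the known orthogonality $f_i\circ g_k=\delta_{i,k}\Delta_{\SB(A)}$.

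The key translation is the following: for the closed embedding $\imath\colon Y\hookrightarrow X$, the projection formula gives $\imath_*\imath^*=\cdot\,[Y]=\cdot\,H$, which in correspondence language says that passing through $Y$ in the middle factor inserts a multiplicative correspondence $c_H$. More precisely, computing $\overline{f}_{i+1}\circ\overline{g}_j$ as a pushforward through $\SB(A)\times Y\times \SB(A)$ and applying the pushforward-pullback relation for the embedding $\id\times\imath\times\id$ into $\SB(A)\times X\times \SB(A)$, one obtains
\[
\overline{f}_{i+1}\circ\overline{g}_j = f_{i+1}\circ c_H\circ g_j.
\]

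Next, I would compute $c_H\circ g_j$ using the explicit description $g_j=c_{H^j}\circ f_{n-2}^t$ from the previous subsection. Since the Chern-class correspondences compose by multiplication, $c_H\circ c_{H^j}=c_{H^{j+1}}$, so
\[
c_H\circ g_j = c_{H^{j+1}}\circ f_{n-2}^t = g_{j+1}.
\]
Combining this with the preceding identity and the orthogonality relation $f_i\circ g_k=\delta_{i,k}\Delta_{\SB(A)}$ yields
\[
\overline{f}_{i+1}\circ\overline{g}_j \;=\; f_{i+1}\circ g_{j+1}\;=\;\delta_{i+1,j+1}\Delta_{\SB(A)}\;=\;\delta_{i,j}\Delta_{\SB(A)}.
\]
The hypothesis $0\le i,j\le n-3$ is exactly what is needed to ensure that the shifted indices $i+1,j+1$ stay in the range $\{1,\ldots,n-2\}$ where the orthogonality of the $f_k$ and $g_k$ is valid.

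The only genuinely delicate point is the correspondence-level identity in the first step; everything else is bookkeeping with the explicit formulas for $f_i$ and $g_i$. The argument also makes transparent why one only obtains an \emph{incomplete} system: the shift $j\mapsto j+1$ coming from multiplication by $H=[Y]$ loses the top projector, so the summand $\MM(\SB(A))(n-2)$ cannot be recovered by restriction from $X$, and the missing piece is exactly where the Artin motive $\MM(\Spec L)(n-2)$ eventually appears in the main theorem.
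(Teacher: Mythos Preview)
Your proposal is correct and follows essentially the same route as the paper: both arguments use the projection formula for the closed embedding $\imath\colon Y\hookrightarrow X$ (together with $\imath_*(1)=[Y]=H$) to identify $\overline{f}_{i+1}\circ\overline{g}_j$ with $f_{i+1}\circ c_H\circ g_j$, then use $c_H\circ g_j=g_{j+1}$ and the known orthogonality $f_{i+1}\circ g_{j+1}=\delta_{i+1,j+1}\Delta_{\SB(A)}$. The paper merely spells out the projection-formula step with explicit $p_{ij}$, $\bar p_{ij}$ notation and cites \cite[Lemma, p.449]{Ma68} for the identity $\imath_*(1)\cdot g_j=c_H\circ g_j$.
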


\begin{proof}
Let $p_{ij}\colon X_1\times X_2\times X_3 \to X_i\times X_j$, $i<j$  denote the projection, and let $p_{ij}^*$, $p_{ij*}$ denote the respective pull-back and push-forward. Applying the projection formula of \cite{Ma68}  we obtain
\begin{align*}
\overline{f}_{i+1}\circ \overline{g}_{j}
&=\bar p_{13*}(\bar p_{23}^*\overline{f}_{i+1} \cdot \bar p_{12}^*\overline{g}_{j}) =\bar p_{13*}(\bar p_{23}^*\imath^*f_{i+1} \cdot \bar p_{12}^*\imath^*g_{j})\\
&=p_{13*}\imath_* (\imath^*p_{23}^*f_{i+1} \cdot \imath^*p_{12}^*g_{j})=p_{13*}\imath_*(\imath^*(p_{23}^*f_{i+1} \cdot p_{12}^*g_{j})\cdot 1)\\
&=p_{13*}(p_{23}^*f_{i+1} \cdot p_{12}^*g_{j}\cdot \imath_*(1))=p_{13*}(p_{23}^*f_{i+1} \cdot p_{12}^*(g_{j}\cdot \imath_*(1))). 
\end{align*}
But $\imath_*(1)\cdot g_j=c_{H}\circ g_j=g_{j+1}$ by \cite[Lemma, p.449]{Ma68}. Therefore, we get
\[
\overline{f}_{i+1}\circ \overline{g}_{j}=f_{i+1}\circ g_{j+1}=\delta_{i+1,j+1}\Delta_{\SB(A)}.\qedhere
\]
\end{proof}

By the lemma, the correspondences 
\[
\overline{p}_i=\overline{g}_i\circ \overline{f}_{i+1} \in \CH^{2n-4}(Y\times Y),\quad 0\leq i\leq n-3
\]
form an orthogonal (incomplete) system of idempotents which gives the following direct sum decomposition:
\begin{equation}\label{eq:MYE}
\MM(Y)=\big(\oplus_{i=0}^{n-3}(Y,\overline{p}_i)\big)\oplus \operatorname{im}e_0=\MM(\SB(A))\oplus 
\MM(\SB(A))(1)\oplus \ldots \oplus \MM(\SB(A))(n-3)\oplus \operatorname{im}\overline{p},
\end{equation}
where $\overline{p}=\Delta_{Y}-\sum_{i=0}^{n-3}\overline{p}_i$ is some nonzero idempotent.

%%%%%%%%%%%%%%%%%%%%%%%%%%%%%%%%%%%%

\section{The monodromy action}\label{sec:Galois}
 
\subsection{\it The Galois action and the correspondence product}
Denote $\Ga_L=\Gal(L/F)$. By definition of the Galois extension, we have $L\otimes L\xrightarrow{\simeq} \prod_{\sigma\in \Ga_L}\!\! {}^\sigma\! L$. So for any smooth projective variety $Z$ over $F$ there is a commutative diagram
\[
\xymatrix{
Z_L \times \Spec L \ar[d] & \coprod_{\sigma\in \Ga_L} Z\times \Spec L \ar[l]_-{\simeq} \\
Z\times \Spec L  & Z\times \Spec L \ar[u]_{\imath_\sigma} \ar[l]_{\id \times \sigma}
},
\]
where $\imath_\sigma$ identifies $Z_L$ with the respective $\sigma$-component. 
Taking the induced pull-backs we obtain the following formula for the restriction map
\begin{equation}\label{eq:restr}
\res_{L/F}\colon \CH(Z\times \Spec L) \to \CH(Z_L \times \Spec L) \xrightarrow{\simeq}
\bigoplus_{\sigma \in \Ga_L} \CH(Z\times \Spec L), \quad
\res_{L/F}(\gamma)=(\sigma \gamma)_{\sigma\in \Ga_L},
\end{equation}
where $\sigma \in \Ga_L$ acts on $\CH(Z\times \Spec L)$ via the second factor.

\begin{lem}\label{lem:corrg}
For $\gamma_1\in \Hom(\MM(\Spec L),\MM(Z))$ and $\gamma_2\in \Hom(\MM(Z),\MM(\Spec L))$ we have
\[
\res_{L/F}(\gamma_1\circ\gamma_2)= \sum_{\sigma\in \Ga_L} \sigma \gamma_1\boxtimes \sigma \gamma_2 \quad\text{ and }\quad
\res_{L/F}(\gamma_2\circ\gamma_1)=(\langle \sigma \gamma_2, \tau \gamma_1\rangle_{Z_L})_{\sigma,\tau \in \Ga_L},
\]
where $\langle \gamma_1,\gamma_2\rangle_{Z_L}$ denotes the $L$-linear pairing $(\gamma_1,\gamma_2)\mapsto p_*(\gamma_1\cdot \gamma_2)$, $p\colon Z_L \to \Spec L$ is the structure map.
\end{lem}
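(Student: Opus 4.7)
The plan is to expand each composition via the standard correspondence product formula, then base-change to $L$ using the commutative diagram above and the isomorphism $L\otimes_F L\simeq\prod_{\sigma\in\Ga_L}L$ to split the $\Spec L$-factors into $\Ga_L$-indexed components. On each component, formula~\eqref{eq:restr} identifies the restrictions of $\gamma_1,\gamma_2$, and summing up the contributions yields both formulas. Throughout, flat base change and the projection formula allow me to commute $\res_{L/F}$ past the pushforward-pullback operations defining the composition.

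For the first identity I would write
\[
\gamma_1\circ\gamma_2=p_{13*}\bigl(p_{12}^*\gamma_2\cdot p_{23}^*\gamma_1\bigr)\in\CH(Z\times_F Z),
\]
with $p_{ij}$ the projections from $Z\times_F\Spec L\times_F Z$. Base-changing to $L$ splits the middle $\Spec L$-factor into $\coprod_{\sigma\in\Ga_L}\Spec L$, and hence decomposes the triple product as $\coprod_\sigma(Z\times_F Z)_L$, with $p_{13,L}$ an isomorphism on each $\sigma$-component. By \eqref{eq:restr}, on the $\sigma$-component the two pulled-back classes become $\sigma\gamma_2$ in the first $Z_L$-factor and $\sigma\gamma_1$ in the second, so their product contributes $\sigma\gamma_1\boxtimes\sigma\gamma_2$; summing over $\sigma$ gives the claim. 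For the second identity, $\gamma_2\circ\gamma_1$ is a pushforward on $\Spec L\times_F Z\times_F\Spec L$ with two independent $\Spec L$-factors, and base-changing to $L$ splits both, decomposing the triple product as $\coprod_{(\sigma,\tau)\in\Ga_L\times\Ga_L}Z_L$ and the target as $\coprod_{\sigma,\tau}\Spec L$. On the $(\sigma,\tau)$-component the restrictions of $\gamma_1$ and $\gamma_2$ become $\tau\gamma_1$ and $\sigma\gamma_2$, and the pushforward collapses each $Z_L$ via its structure morphism $p\colon Z_L\to\Spec L$, which is precisely the $L$-linear pairing $\langle\sigma\gamma_2,\tau\gamma_1\rangle_{Z_L}$.

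The only real difficulty is consistent bookkeeping of which Galois element indexes which $\Spec L$-factor when several copies are split simultaneously, and of which projection of $\Spec L\times_F\Spec L\simeq\coprod_\sigma\Spec L$ restricts to the identity versus to $\sigma$; once these identifications are matched against the commutative diagram in the setup, both formulas fall out directly from flat base change and the projection formula.
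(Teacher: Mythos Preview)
Your proposal is correct and follows essentially the same approach as the paper, which simply invokes the compatibility $\res_{L/F}(\gamma_i\circ\gamma_j)=\res_{L/F}(\gamma_i)\circ\res_{L/F}(\gamma_j)$ together with the splitting $\MM(\Spec L\otimes_F L)\simeq\bigoplus_{\sigma\in\Ga_L}\MM(\Spec L)$ and formula~\eqref{eq:restr}. You have merely unpacked these steps explicitly via the projection formula and flat base change, which is exactly what underlies the paper's one-line hint.
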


\begin{proof} 
Use $\res_{L/F}(\gamma_i\circ\gamma_j) =\res_{L/F}(\gamma_i)\circ\res_{L/F}(\gamma_i)$, $\MM(\Spec L\otimes L)\simeq \oplus_{\sigma\in \Ga_L} \MM(\Spec L)$ and apply \eqref{eq:restr}.
\end{proof}

\begin{cor}\label{cor:circ}
We have
$\gamma_2\circ\gamma_1=\Delta_{\Spec L} \Longleftrightarrow \langle \gamma_1, \sigma\gamma_2 \rangle_{Z_L}=\delta_{\id,\sigma}$.
\end{cor}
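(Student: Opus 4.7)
The plan is to apply Lemma~\ref{lem:corrg} to $\gamma_2\circ\gamma_1$, compare its restriction to $L$ with that of $\Delta_{\Spec L}$, and then use $\ZZ$-valuedness of the pairing together with commutativity of intersection to collapse the resulting doubly-indexed matching condition to the singly-indexed statement claimed.

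First I would note that $L\otimes_F L\simeq \prod_{\sigma\in\Ga_L}{}^\sigma L$ is étale, so the restriction map $\res_{L/F}$ on $\CH(\Spec L\times_F\Spec L)$ is injective: it refines each component of $\Spec L\times_F\Spec L=\coprod_\tau\Spec L$ into further copies, each with multiplicity one. Hence $\gamma_2\circ\gamma_1=\Delta_{\Spec L}$ iff the two sides agree after $\res_{L/F}$. By Lemma~\ref{lem:corrg}, the $(\sigma,\tau)$-entry of $\res_{L/F}(\gamma_2\circ\gamma_1)$ in the decomposition $\CH((\Spec L\times_F\Spec L)_L)\simeq\bigoplus_{\sigma,\tau\in\Ga_L}\ZZ$ is $\langle\sigma\gamma_2,\tau\gamma_1\rangle_{Z_L}$, while $\Delta_{\Spec L}$ becomes the identity endomorphism of $\MM(\Spec L)_L\simeq\bigoplus_\sigma\MM(\Spec L)$, whose $(\sigma,\tau)$-entry is $\delta_{\sigma,\tau}$. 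Thus the equality is equivalent to
\[
\langle\sigma\gamma_2,\tau\gamma_1\rangle_{Z_L}=\delta_{\sigma,\tau}\quad\text{for all }\sigma,\tau\in\Ga_L.
\]

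Finally I would exploit the fact that $\langle\cdot,\cdot\rangle_{Z_L}$ is $\ZZ$-valued (hence Galois-fixed) and commutes with the Galois action on both slots, i.e.\ $\langle\sigma\alpha,\sigma\beta\rangle=\langle\alpha,\beta\rangle$; moreover it is symmetric, as the Chow intersection product is commutative. Applying these successively,
\[
\langle\sigma\gamma_2,\tau\gamma_1\rangle=\langle\gamma_2,\sigma^{-1}\tau\gamma_1\rangle=\langle\sigma^{-1}\tau\gamma_1,\gamma_2\rangle=\langle\gamma_1,\tau^{-1}\sigma\gamma_2\rangle,
\]
so this entry depends only on $\mu:=\tau^{-1}\sigma$, and likewise $\delta_{\sigma,\tau}=\delta_{\id,\mu}$. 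The doubly-indexed condition therefore collapses to $\langle\gamma_1,\mu\gamma_2\rangle_{Z_L}=\delta_{\id,\mu}$ for all $\mu\in\Ga_L$, which is the statement. No substantive obstacle is anticipated; the only care needed is to track the indexing and Galois conventions in the identifications $L\otimes_F L\simeq\prod_\sigma{}^\sigma L$ consistently with~\eqref{eq:restr} and Lemma~\ref{lem:corrg}.
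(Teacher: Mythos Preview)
Your proof is correct and follows essentially the same approach as the paper. The paper states this corollary without proof, leaving it as an immediate consequence of Lemma~\ref{lem:corrg}; your write-up simply makes explicit the steps the reader is expected to fill in---injectivity of $\res_{L/F}$ on $\CH(\Spec L\times_F\Spec L)$, identification of $\res_{L/F}(\Delta_{\Spec L})$ with the identity matrix, and the collapse of the doubly-indexed condition $\langle\sigma\gamma_2,\tau\gamma_1\rangle_{Z_L}=\delta_{\sigma,\tau}$ to the singly-indexed one via Galois-equivariance and symmetry of the pairing.
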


%%%%%%%%%%%%%%%%%%%%%%%%%%%
 
\subsection{\it The $1$-cocycle}

Following \cite[\S2.5]{GS06} there is an explicit isomorphism $\rho\colon A_L\to M_n(L)$ of $L$-algebras given by 
\[
\rho(u)=\begin{bmatrix}
\sqrt[n]{a}\\
& \zeta \sqrt[n]{a}\\
&& \ddots \\
&&& \zeta^{n-1}\sqrt[n]{a}
\end{bmatrix},\qquad
\rho(v)=\begin{bmatrix}
0&&&b\\
1&\ddots\\
&\ddots&0 \\
&&1&0
\end{bmatrix}.
\]
Observe that $\rho$ does not commutes with obvious Galois group actions on $A_L$ and $M_n(L)$. The obstruction is given by a $1$-cocycle
\[
\mathfrak{a}\colon \Ga_L\longrightarrow \Aut_L(M_n(L))\simeq \PGL_n(L),
\qquad \sigma\longmapsto \mathfrak{a}_{\sigma}
=\rho\circ \sigma\circ \rho^{-1}\circ\sigma^{-1}.
\]
We can describe this cocycle explicitly as follows
\begin{lem} For the cyclic generator $\eta \in \Ga_L$ with $\eta(\sqrt[n]{a})=\zeta \sqrt[n]{a}$ we have 
\[
\mathfrak{a}_{\eta^k}=M^k,\quad k=0,\ldots,n-1,\] where $M$ denotes the class of $\rho(v)$ in  $\PGL_n(F)$.
\end{lem}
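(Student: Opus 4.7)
The cocycle formula
\[
\mathfrak{a}_{\sigma\tau}=\rho\circ\sigma\tau\circ\rho^{-1}\circ(\sigma\tau)^{-1}
=\mathfrak{a}_\sigma\cdot(\sigma\star\mathfrak{a}_\tau),\qquad \sigma\star\phi:=\sigma\circ\phi\circ\sigma^{-1},
\]
is immediate from the definition of $\mathfrak{a}$. Since the matrix $\rho(v)$ has entries $0$, $1$ and $b$ in $F$, its class $M\in\PGL_n(F)$ is Galois-invariant, i.e.\ $\sigma\star M=M$ for every $\sigma\in\Ga_L$. Therefore, assuming the case $k=1$, an easy induction on $k$ using the cocycle identity gives
\[
\mathfrak{a}_{\eta^k}=\mathfrak{a}_\eta\cdot(\eta\star\mathfrak{a}_{\eta^{k-1}})=M\cdot(\eta\star M^{k-1})=M\cdot M^{k-1}=M^k.
\]
So the whole statement reduces to the single identity $\mathfrak{a}_\eta=M$ in $\PGL_n(L)$.

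To establish this, I will compare the two $L$-algebra automorphisms $\mathfrak{a}_\eta$ and $\operatorname{Int}_{\rho(v)}$ of $M_n(L)$ on the generating set $\{\rho(u),\rho(v)\}$: indeed $\rho(u)$ and $\rho(v)$ generate $A_L\cong M_n(L)$ as an $L$-algebra (this is the defining property of the cyclic algebra presentation, and any two $L$-algebra automorphisms that agree on a generating set are equal). On $\rho(v)$ both maps obviously act as the identity (entries of $\rho(v)$ lie in $F$, and conjugation by $\rho(v)$ fixes $\rho(v)$), so the only nontrivial point is to check the action on $\rho(u)$.

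For that, I unravel $\mathfrak{a}_\eta(\rho(u))=\rho\eta\rho^{-1}\eta^{-1}(\rho(u))$. Since $\rho(u)$ is the diagonal matrix with entries $\zeta^{i-1}\sqrt[n]{a}$ and $\eta^{-1}(\sqrt[n]{a})=\zeta^{-1}\sqrt[n]{a}$ while $\eta^{-1}(\zeta)=\zeta$, a one-line calculation shows $\eta^{-1}(\rho(u))=\zeta^{-1}\rho(u)$; applying $\rho^{-1}$, then $\eta$ (which fixes $u\in A\subset A_L$), then $\rho$ returns $\zeta^{-1}\rho(u)$. On the other hand, the relation $uv=\zeta vu$ in $A$ gives $vuv^{-1}=\zeta^{-1}u$, hence $\operatorname{Int}_{\rho(v)}(\rho(u))=\zeta^{-1}\rho(u)$. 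The two actions agree, so $\mathfrak{a}_\eta=M$ in $\PGL_n(L)$, which closes the induction.

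The computation is entirely formal; there is no real obstacle. The only place where a reader might stumble is the bookkeeping of what ``$\sigma$ acts on'' in the different incarnations $A_L$, $M_n(L)$ and $\PGL_n(L)$, but this is settled once one fixes the convention $\sigma\star\phi=\sigma\phi\sigma^{-1}$ and notes that $\eta$ acts trivially on $A$ and on any matrix with $F$-entries.
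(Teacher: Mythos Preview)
Your proof is correct and follows essentially the same approach as the paper: both verify the identity by checking the automorphism on the generators $\rho(u)$ and $\rho(v)$, using $\eta^{-1}(\rho(u))=\zeta^{-1}\rho(u)$ together with the relation $vuv^{-1}=\zeta^{-1}u$. The only difference is cosmetic---you first reduce to $k=1$ via the cocycle identity and the Galois-invariance of $M$, whereas the paper checks all $k$ at once (the computation for general $k$ is no harder than for $k=1$).
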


\begin{proof}
Indeed, for $z=\eta^k \rho(u)=\zeta^k \rho(u)$ and $z=\eta^k \rho(v)=\rho(v)$ we have
\[
\zeta^k M^k\rho(u)M^{-k} = \rho(u)\quad \text{ and }\quad
M^k\rho(v)M^{-k} = \rho(v).
\]
So that $M^kzM^{-k} = \rho(\eta^k(\rho^{-1}(\eta^{-k} z)))$. 
\end{proof}

Let $T\subset \PGL_n(F)$ be the maximal torus (the subgroup of diagonal matrices). 
Since $M$ normalizes $T$, we immediately obtain the following 
\begin{cor} The $1$-cocycle $\mathfrak{a}$ gives a group homomorphism
\[
\mathfrak{a}\colon \Ga_L\longrightarrow N_T(\PGL_n(F)),\quad \eta\mapsto \mathfrak{a}_\eta=M.
\]
\end{cor}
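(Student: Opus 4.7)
The plan is to verify the two separate assertions packaged in the Corollary: first, that $\mathfrak{a}$ is a genuine group homomorphism (and not merely a $1$-cocycle), and second, that the image of $\mathfrak{a}$ lies in the normalizer $N_T(\PGL_n(F))$ of the maximal torus. Both will follow almost immediately from the preceding lemma together with a direct matrix computation.

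For the first claim I would observe that the matrix $\rho(v)$ has entries in $\{0,1,b\}\subset F$, so its class $M \in \PGL_n(F)$ is fixed by the Galois action of $\Ga_L$ on $\PGL_n(L)$. Combined with the formula $\mathfrak{a}_{\eta^k}=M^k$ from the previous lemma, this shows that every value of $\mathfrak{a}$ is Galois-invariant. Consequently the general $1$-cocycle identity $\mathfrak{a}_{\sigma\tau}=\mathfrak{a}_\sigma\cdot \sigma(\mathfrak{a}_\tau)$ collapses to the group-homomorphism identity $\mathfrak{a}_{\sigma\tau}=\mathfrak{a}_\sigma \mathfrak{a}_\tau$. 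Alternatively one reads the homomorphism property off directly, since $\Ga_L=\langle \eta\rangle$ and $\mathfrak{a}_{\eta^{j+k}}=M^{j+k}=M^jM^k=\mathfrak{a}_{\eta^j}\mathfrak{a}_{\eta^k}$.

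For the second claim I would give a direct conjugation check. Since $\rho(v)$ is the cyclic permutation matrix with a $b$ in the upper-right corner, one computes
\[
M\,\operatorname{diag}(t_1,t_2,\ldots,t_n)\,M^{-1} = \operatorname{diag}(t_2,t_3,\ldots,t_n,t_1),
\]
so $MTM^{-1}=T$ and thus $M\in N_T(\PGL_n(F))$. As the image of $\mathfrak{a}$ is the cyclic subgroup $\langle M\rangle$, the entire image lies in $N_T(\PGL_n(F))$.

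I do not foresee any real obstacle here: the corollary is essentially a bookkeeping consequence of the previous lemma. The only mildly subtle point is recognizing that $F$-rationality of $\rho(v)$ is exactly what reduces the twisted cocycle relation to the ordinary multiplication rule, after which the normalizer statement is a one-line matrix check.
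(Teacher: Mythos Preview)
Your proof is correct and matches the paper's own reasoning, which simply records ``Since $M$ normalizes $T$, we immediately obtain the following'' and relies on the preceding lemma $\mathfrak{a}_{\eta^k}=M^k$ for the homomorphism property; you have merely spelled out both steps in more detail. One cosmetic slip: with the given matrix $\rho(v)$ one actually gets $M\,\operatorname{diag}(t_1,\ldots,t_n)\,M^{-1}=\operatorname{diag}(t_n,t_1,\ldots,t_{n-1})$ rather than $\operatorname{diag}(t_2,\ldots,t_n,t_1)$, but this does not affect the conclusion $MTM^{-1}=T$.
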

In this way, we obtain a group homomorphism from $\Ga_L$ to the symmetric (Weyl) group $S_n$. We will denote by the same symbol $\eta$ (resp. $\sigma$) the image of an element of $\Ga_L$ in $S_n$.

\subsection{\it The $T$-fixed point locus} 
Consider the Milnor hypersurface $X_0=\{V_1\subset V_{n-1} \subset F^n\}$. The torus $T\subset \PGL_n(F)$ acts by conjugations on $M_n(F)$ and, hence, by scaling basis vectors $\{e_1,\ldots,e_n\}$ of $F^n$. The subset $X_0^T$ of $T$-fixed points then consists of flags
\[
[ij]=(\Span(e_i)\subset \Span(e_1,\ldots,\cancel{e}_{j},\ldots,e_n))\in X_0, \quad 1\le i\neq j\le n.
\]
Consider the induced isomorphism $\rho\colon X_L\xrightarrow{\simeq} (X_0)_L$ together with the induced action by $T_L$ on $X_L$. 

\begin{lem}
We have the following commutative diagram of $T_L$-fixed point loci:
\[
\xymatrix{
 (X_L)^{T_L}\ar[r]^-\rho\ar[d]_{\sigma} & X_0^T\times \Spec L \ar[d]^{\mathfrak{a}_\sigma\times \sigma} \ar@{=}[r] & {\coprod}_{1\leq i\neq j\leq n} [ij]_L \ar[d]^{\mathfrak{a}_\sigma\times \sigma} \\
 (X_L)^{T_L}\ar[r]^-\rho & X_0^T\times \Spec L \ar@{=}[r] & {\coprod}_{1\leq i\neq j\leq n} [ij]_L.
}
\]
\end{lem}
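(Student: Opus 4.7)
The right square in the diagram is essentially tautological: $X_0^T$ is the finite disjoint union of the isolated fixed points $[ij]$, so base change to $L$ gives $X_0^T\times \Spec L = \coprod_{ij}[ij]_L$, and since $\mathfrak{a}_\sigma$ lies in $N_T(\PGL_n(F))$ it permutes the $[ij]$ by its image in the Weyl group $S_n$, while $\sigma$ acts only on $\Spec L$. So the substantive content is commutativity of the left square, i.e., the identity $(\mathfrak{a}_\sigma \times \sigma)\circ \rho = \rho \circ \sigma$ as maps $X_L \to (X_0)_L$.

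The plan is to derive this identity directly from the defining cocycle equation $\mathfrak{a}_\sigma = \rho\circ\sigma\circ\rho^{-1}\circ\sigma^{-1}$, viewed as an identity of $L$-linear automorphisms of $M_n(L)$. Rewriting gives $\rho\circ\sigma = \mathfrak{a}_\sigma \circ (\id\otimes \sigma)\circ \rho$ as $F$-algebra maps $A_L\to M_n(L)$, where $\id\otimes \sigma$ denotes the Galois action on $M_n(L)=M_n(F)\otimes_F L$ via the second factor.

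Next I would transport this algebra-level identity to the flag varieties. Since $X$ and $X_0$ are defined as flags of right ideals in $A$ and $M_n(F)$ respectively, and $\rho$ is induced by sending the chain of ideals parametrized by a point of $X_L$ to its image under $\rho\colon A_L\to M_n(L)$, the Galois action on either side is just the action on the $L$-coefficients of these ideals. Because the flag variety construction is functorial in the algebra, the identity at the level of $A_L$ translates to the desired identity of morphisms of $L$-schemes $X_L\to (X_0)_L$. Restricting to the $T_L$-fixed loci is then automatic, using that $\mathfrak{a}_\sigma$ normalizes $T$ and hence preserves $X_0^T$.

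I do not anticipate a substantive obstacle; the only care needed is bookkeeping of the two distinct Galois actions, namely the one on $A_L$ coming from descent of $A$ to $F$ versus the one on $M_n(L)$ acting entry-wise, whose discrepancy is precisely measured by the cocycle $\mathfrak{a}$.
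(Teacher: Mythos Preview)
Your proposal is correct and follows essentially the same route as the paper. The paper's brief proof emphasizes the explicit action of $\mathfrak{a}_\eta=M$ on the basis vectors (hence $[ij]\mapsto [\eta(i)\eta(j)]$), taking the commutativity of the left square as immediate from the defining cocycle relation; you make that commutativity explicit via the rewriting $\rho\circ\sigma = \mathfrak{a}_\sigma\circ(\id\otimes\sigma)\circ\rho$ and instead absorb the permutation statement into the observation that $\mathfrak{a}_\sigma\in N_T(\PGL_n(F))$.
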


\begin{proof}
Indeed, $\mathfrak{a}_\eta=M$ acts on the basis  as follows: $e_1\mapsto e_2$, $e_2\mapsto e_3$, $\ldots$, $e_n\mapsto be_1$. So it maps the flag $[ij]$ to the flag $[\eta(i)\eta(j)]$, where $\eta=(1,2,\ldots,n)\in S_n$ is the corresponding cyclic permutation. Therefore, $\mathfrak{a}_\sigma\times\sigma$ is an automorphism of the $T_L$-fixed locus $X_0^T\times\Spec L$.
\end{proof}

Observe that $\mathfrak{a}_\sigma$ does not commute with the $T$-action on $X_0$ but it commutes up to a conjugation.
Namely, let $\operatorname{Inn}(\mathfrak{a}_\sigma)(t)=\mathfrak{a}_\sigma t \mathfrak{a}_\sigma^{-1}$, $t\in T$. Then there is a commutative diagram
\begin{equation}\label{diag:Tact}
\xymatrix{
T\times X_0 \ar[r] \ar[d]_{\operatorname{Inn}(\mathfrak{a}_\sigma) \times \mathfrak{a}_\sigma} & X_0 \ar[d]^{\mathfrak{a}_\sigma}\\
T\times X_0 \ar[r] & X_0.
}
\end{equation}

\subsection{\it The Galois action on equivariant Chow groups} Consider the $T$-equivariant Chow group $\CH_{T}(X_0)$ of \cite{To97,Br97}.
For the $T$-fixed point locus we obtain
\[
\CH_T(X_0^T)=\CH_T\big({\textstyle \coprod}_{1\leq i\neq j\leq n} [ij]\big)\simeq \bigoplus_{1\leq i\neq j\leq n}\Sym_{\ZZ}T^*,
\]
where $T^*=\Hom(T,F^\times)$ is the group of characters of $T$.
According to \cite[(3.2),(3.3)]{Br97} the pull-back induced by the embedding $X_0^T \hookrightarrow X_0$ gives the inclusion $\CH_T(X_0) \hookrightarrow \CH_T(X_0^T)$. Combining all these observations together we obtain

\begin{prop} \label{thm:mainmon}
There is a commutative diagram where 
$\widehat \sigma (\varphi_{ij}\big)_{ij} = (\sigma\,\varphi_{\sigma^{\text{-}1}(i)\sigma^{\text{-}1}(j)})_{ij}$, $\varphi_{ij}\in \operatorname{Sym}_{\mathbb{Z}} T^*$
\[
\xymatrix{
 \CH_{T_L}(Y_L) \ar[d]_{\sigma}  & \CH_{T_L}(X_L) \ar[d]_{\sigma}  \ar[l]_{\imath^*} & \CH_{T_L}(X_0 \times \Spec L) \ar@{^(->}[r]\ar[l]_-{\rho^*} \ar[d]^{\mathfrak{a}_\sigma \times\sigma}& \bigoplus_{1\leq i\neq j\leq n}\operatorname{Sym}_{\mathbb{Z}}T^* \ar[d]^{\widehat \sigma}\\
 \CH_{T_L}(Y_L)   &  \CH_{T_L}(X_L)  \ar[l]_{\imath^*} & \CH_{T_L}(X_0 \times \Spec L) \ar@{^(->}[r] \ar[l]_-{\rho^*} & \bigoplus_{1\leq i\neq j\leq n}\operatorname{Sym}_{\mathbb{Z}} T^*
}.
\]
\end{prop}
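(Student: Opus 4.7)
The plan is to verify commutativity of the diagram one square at a time, from left to right.

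\emph{The first two squares.} The embedding $\imath\colon Y \hookrightarrow X$ is defined over $F$, since the cutting condition $uI_1\subset I_{n-1}$ involves only the $F$-rational element $u\in A$; hence $\imath_L$ is $\Ga_L$-equivariant and the pullback $\imath^*$ commutes with the Galois action. For the middle square, the defining relation $\mathfrak{a}_\sigma = \rho\circ \sigma \circ \rho^{-1} \circ \sigma^{-1}$ of the cocycle rearranges to $\rho \circ \sigma = (\mathfrak{a}_\sigma \times \sigma) \circ \rho$ as automorphisms of $(X_0)_L$, and taking pullbacks on equivariant Chow yields the commutativity.

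\emph{The rightmost square.} The horizontal inclusion is the restriction to the $T_L$-fixed locus, which by \cite{Br97} embeds $\CH_{T_L}((X_0)_L)$ into $\bigoplus_{1\le i\ne j\le n}\Sym_\ZZ T^*$. To identify the induced action of $\mathfrak{a}_\sigma \times \sigma$ on this target I combine two ingredients: (i)~the permutation of fixed points $[ij]\mapsto [\sigma(i)\sigma(j)]$ established in the previous lemma, and (ii)~the fact, recorded in~\eqref{diag:Tact}, that $\mathfrak{a}_\sigma$ is equivariant only up to the inner automorphism $\operatorname{Inn}(\mathfrak{a}_\sigma)$, which acts on $T$ (and hence on $T^*$) as the Weyl-group element corresponding to $\sigma$. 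The Galois factor $\sigma$ on $\Spec L$ contributes no further twist on characters, since $T$ is split over $F$. Composing the permutation of indices with the character twist then produces the formula $\widehat\sigma(\varphi_{ij})_{ij} = (\sigma\varphi_{\sigma^{-1}(i)\sigma^{-1}(j)})_{ij}$.

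\emph{Main obstacle.} The bulk of the work is in the rightmost square: one must track the direction of the induced permutation on fixed-point indices (pullback reverses a point-to-point morphism) and the matching direction of the Weyl-group twist on characters under $\operatorname{Inn}(\mathfrak{a}_\sigma)^*$, so that the two permutations compose consistently into a genuine left action of $\Ga_L$. Once these sign choices are aligned, the proposition follows formally from the concatenation of the three squares.
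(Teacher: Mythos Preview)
Your proof is correct and follows essentially the same route as the paper's: both identify the action on the fixed-point side via the permutation $[ij]\mapsto[\sigma(i)\sigma(j)]$ together with the Weyl-group twist on characters coming from~\eqref{diag:Tact}, and both note that the Galois factor acts trivially on $T^*$ since $T$ is already split over $F$. The one point the paper spells out that you leave implicit is why $\rho(Y_L)$ is $T_L$-stable (because $\rho(u)$ is diagonal), which is what makes the equivariant pullback $\imath^*$ and hence the leftmost column well-defined; you take this for granted, which is harmless since it is built into the statement.
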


\begin{proof}
Recall that $\mathfrak{a}_\sigma$ permutes the $T$-fixed points via $\mathfrak{a}_\sigma([ij])=[\sigma(i)\sigma(j)]$.
By \eqref{diag:Tact} it also induces an endomorphism of $\CH_T([ij])$ given by the action of the Weyl (symmetric) group 
via $\lambda \circ \operatorname{Inn}(\mathfrak{a}_\sigma)^{-1}= \sigma \lambda$, $\lambda\in T^*$ (cf. \cite{EG97}).
Observe that the Galois group acts trivially on $T^*$ \cite[\S13.2]{Sp09}.

Finally, since the matrix $\rho(u)$ is diagonal, the image $\rho(Y_L)$ is $T_L$-stable and, moreover, $(Y_L)^{T_L}=(X_L)^{T_L}$ (see also \cite{BP22}). Therefore, we can restrict the action by $\sigma$ to $\CH_{T_L}(Y_L)$.
\end{proof}

\begin{rem}
Observe that the action by $\sigma$ is trivial on $\CH(X_L)$. It is also trivial on all Chow groups  $\CH^i(Y_L)$ except for $i=n-2$ (the middle dimension). We call the action by $\sigma$ on the equivariant Chow groups of $Y_L$ and $X_L$ (resp. by $\widehat \sigma$ on $\CH_{T_L}((X_0^{T})_L)$) the monodromy action.
\end{rem}

\section{The Artin motive part}\label{sec:Artin}

\subsection{\it The localization for $Y_L$}
We consider a graph with $n(n-1)$ vertices denoted $[ij]$, $1\leq i\neq j\leq n$, which has two types of labelled edges
\[
[ij]\stackrel{\alpha_{jk}}{\text{------}}[ik] 
\quad \text{ and }\quad 
[ij]\stackrel{\alpha_{ik}}{\text{------}}[kj],
\quad \text{ where all $i,j,k$ are distinct,}
\]
and $\alpha_{ij}=t_i-t_j\in T^*$, $t_i\colon diag(z_1,\ldots,z_n)\mapsto z_i^{-1}$.

\begin{lem} 
The image of $\CH_{T_L}(Y_L)$ in $\oplus_{1\leq i\neq j\leq n}\Sym_{\mathbb{Z}} T^*$ under the horizontal maps of the diagram of Theorem~\ref{thm:mainmon} is given by
\[
\CH_{T_L}(Y_L)\simeq \big\{
(\varphi_{ij})_{ij}: 
\alpha|\varphi_{ij}-\varphi_{kh}
\text{ for any edge $[ij]\stackrel{\alpha}{\text{---}}[kh]$}
\big\}.
\]
Moreover, the Poincar\'e pairing is given by
\begin{equation}\label{eq:poinc}
\langle \varphi,\psi\rangle_{Y_L}
=\sum_{1\leq i\neq j\leq n}\frac{\varphi_{ij}\psi_{ij}}{\prod_{s\neq i,j}\alpha_{is}\alpha_{sj}}.
\end{equation}
\end{lem}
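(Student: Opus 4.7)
The plan is to apply GKM-style localization theory for equivariantly formal smooth projective $T_L$-varieties with isolated fixed points (see \cite{Br97,EG97}) to $Y_L$. By Proposition~\ref{thm:mainmon} the fixed locus $(Y_L)^{T_L}\simeq X_0^T\times\Spec L$ is finite, consisting of the $n(n-1)$ points labelled $[ij]$. First I would verify equivariant formality of $Y_L$ by choosing a generic cocharacter $\chi\colon\mathbb{G}_m\to T_L$ with $(Y_L)^\chi=(Y_L)^{T_L}$ and invoking the associated Bialynicki--Birula decomposition of $Y_L$ into affine cells indexed by fixed points; this makes $\CH_{T_L}^*(Y_L)$ a free $\Sym_{\ZZ}T^*$-module of rank $n(n-1)$ and guarantees that the restriction to fixed points $\CH_{T_L}(Y_L)\hookrightarrow\bigoplus_{ij}\Sym_{\ZZ}T^*$ is injective, matching the composite of horizontal maps in Proposition~\ref{thm:mainmon}.

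Next I would apply the Chang--Skjelbred criterion: the image of $\CH_{T_L}^*(Y_L)$ equals the intersection, over all $1$-dimensional $T_L$-orbits $O\subset Y_L$, of the images of $\CH_{T_L}(\overline O)\hookrightarrow\CH_{T_L}(\overline O^{T_L})$; for a $\PP^1$-orbit joining fixed points $p,q$ with tangent weight $\alpha$ this is the congruence $\varphi_p\equiv\varphi_q\pmod{\alpha}$. The key combinatorial step is therefore to enumerate the $1$-dim $T$-orbits of $X_0$ lying in $\rho(Y_L)$. A weight-space analysis classifies the $1$-dim $T$-orbits of $X_0$ into three types: (a) $V_1=\Span(e_i)$ fixed and $V_{n-1}$ moving in the pencil of hyperplanes through $H_j\cap H_k$ ($i,j,k$ distinct), joining $[ij]$ with $[ik]$ and carrying weight $\alpha_{jk}$; (b) $V_{n-1}=H_j$ fixed and $V_1$ moving along a coordinate line, joining $[ij]$ with $[kj]$ and carrying weight $\alpha_{ik}$; and (c) ``swap'' curves parametrised by $(V_1,V_{n-1})=([e_i+te_k],\{x_k=tx_i\})$ joining $[ik]$ with $[ki]$. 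Since $\rho(u)$ is diagonal, it preserves every coordinate line and every coordinate hyperplane, so orbits of types~(a) and~(b) lie in $\rho(Y_L)$. On a type~(c) curve, however, the condition $uV_1\subset V_{n-1}$ forces $\zeta^{k-1}=\zeta^{i-1}$, impossible for $i\neq k$, so such orbits meet $Y_L$ only at their two endpoints. Chang--Skjelbred then yields the first assertion.

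For the Poincar\'e pairing I plan to invoke the Atiyah--Bott--Berline--Vergne equivariant localization formula \cite{EG97}:
\[
\langle\varphi,\psi\rangle_{Y_L}=\sum_{1\le i\neq j\le n}\frac{\varphi_{ij}\psi_{ij}}{e^{T_L}(T_{[ij]}Y_L)}.
\]
The tangent weights of $Y_L$ at $[ij]$ are precisely the labels of edges of the GKM graph incident to $[ij]$ (up to the sign conventions of Proposition~\ref{thm:mainmon}): the $n-2$ values $\alpha_{sj}$ with $s\neq i,j$ from type~(a) together with the $n-2$ values $\alpha_{is}$ with $s\neq i,j$ from type~(b). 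Multiplying them yields $e^{T_L}(T_{[ij]}Y_L)=\prod_{s\neq i,j}\alpha_{is}\alpha_{sj}$, giving the claimed formula.

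The main obstacle is the curve enumeration, especially verifying that the ``swap'' curves (type~(c)) are exactly those excluded by the defining condition $uI_1\subset I_{n-1}$ of $Y$. Once the GKM graph is correctly identified, both parts of the lemma reduce to standard equivariant localization machinery.
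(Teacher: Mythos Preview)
Your proposal is correct and follows essentially the same route as the paper's proof: classify the $T$-stable curves in $X_0$ into the three families (your (a),(b),(c) are the paper's plane curves (ii),(iii) and root-conic (i)), check that only the plane curves lie in $Y_L$, and then read off both the GKM congruences and the equivariant Euler class at each fixed point from the edge labels. The only difference is packaging: the paper outsources the curve classification and the exclusion of the root-conic curves to \cite[Def.~3.6, Prop.~3.8]{BP22} and invokes \cite[\S3.4]{Br97} directly (noting primitivity of the edge weights), whereas you argue the exclusion of type~(c) by hand and phrase the localization step as Bialynicki--Birula plus Chang--Skjelbred; these are the same ingredients.
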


\begin{proof}
The first part of the lemma is a consequence of results of \cite[\S3]{BP22} and \cite[\textsection 3.4]{Br97}.
Namely, by~\cite[Def.3.6]{BP22} there are three types of $T$-stable curves $C$ over $X_0$ (the weight of $C$ at a $T$-fixed point $x$ is the $T$-weight in $\Tan_{x}C$):
\begin{itemize}
\item[(i)] 
a root-conic curve connecting $[ij]$ and $[ji]$ with weight $\alpha_{ij}$ at $[ij]$:
\[
\PP^1\ni [x\colon y]\mapsto
\big(\Span(xe_i+ye_j)\subset \Span(e_1,\ldots,\cancel{e}_{i},\ldots,\cancel{e}_{j},\ldots,e_n,xe_i+ye_j)\big),
\]
\item[(ii)] 
a plane curve connecting $[ij]$ and $[ik]$ with weight $\alpha_{kj}$ at $[ij]$ (for distinct $i,j,k$): 
\[
\PP^1\ni [x\colon y]\mapsto
\big(\Span(e_i)\subset \Span(e_1,\ldots,\cancel{e}_{j},\ldots,\cancel{e}_{k},\ldots,e_n,ye_j+xe_k)\big),
\]
\item[(iii)] 
a plane curve connecting $[ij]$ and $[kj]$ with weight $\alpha_{ik}$ at $[ij]$ (for distinct $i,j,k$): 
\[
\PP^1\ni [x\colon y]\mapsto
\big(\Span(xe_i+ye_k)\subset \Span(e_1,\ldots,\cancel{e}_{j},\ldots,e_n)\big).
\]
\end{itemize}        
By Prop.~3.8~loc cit., only the plane curves are contained in $Y_L$ (over $L$). 
Observe that weights of all edges are primitive (not divisible in $\Sym_{\ZZ} T^*$). 
The description of $\CH_{T_L}(Y_L)$ then follows from \cite[\textsection 3.4]{Br97}. 

As for the Poincar\'e pairing, observe that there is an inclusion $\Tan_{x}C_L\subset \Tan_{x}Y_L$
for each $x=[ij]\in C$. Recall that the $T$-weights of $\Tan_x C_L$ are $\alpha_{is}$ and $\alpha_{sj}$ ($s\neq i,j$)
which are all distinct. By dimension reasons we conclude that
$\Tan_{x}Y_L=\oplus_{C\ni x} \Tan_{x}C_L$. So the equivariant Euler class is given by
$e^{T}(\Tan_x Y_L)=\prod_{s\neq i,j}\alpha_{is}\alpha_{sj}$,
and the desired formula then follows.
\end{proof}

\subsection{\it The Artin motive support} 
For $1\leq \ell\leq n$ we define the following cycles in $\oplus_{1\leq i\neq j\leq n}\Sym_{\mathbb{Z}} T^*$
\[
\gamma_\ell = (\gamma^{\ell}_{ij})_{ij},\;\text{ where }\gamma^{\ell}_{ij}=\delta_{i,\ell}{\textstyle \prod}_{s\neq i,j}\alpha_{is}.
\]

\begin{lem} \label{lem:gammad} 
We have 
{\rm (i)}~$\gamma_{\ell}\in \CH_T^{n-2}(Y_L)$,\ \ 
{\rm (ii)}~$\widehat \sigma\gamma_{\ell}=\gamma_{\sigma(\ell)}$, $\sigma\in \Ga_L$,\ \  
and~{\rm (iii)}~$\langle \gamma_{k},\gamma_{\ell}\rangle_{Y_L}=(-1)^{n-2}\delta_{k,\ell}$.
\end{lem}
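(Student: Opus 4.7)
\medskip

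\noindent\textbf{Proof proposal.} Assuming the previous lemma which describes $\CH_{T_L}(Y_L)$ as the GKM-subspace and gives the Poincar\'e pairing formula, all three assertions reduce to explicit algebraic verifications in $\oplus_{ij}\Sym_{\ZZ}T^*$. I would handle them in the order (i), (ii), (iii).

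\medskip

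\noindent\emph{Part (i).} First I note that each component $\gamma^{\ell}_{ij}$ is a product of $n-2$ linear forms, so has the right degree $n-2$. It remains to verify the GKM compatibility on the two types of edges of the Goresky--Kottwitz--MacPherson graph of $Y_L$. For a type (ii) edge $[ij]\stackrel{\alpha_{jk}}{\text{------}}[ik]$, only the case $\ell=i$ is non-trivial and there I would factor
\[
\gamma^{\ell}_{ij}-\gamma^{\ell}_{ik}
={\textstyle\prod_{s\neq i,j,k}}\alpha_{is}\cdot (\alpha_{ik}-\alpha_{ij})
={\textstyle\prod_{s\neq i,j,k}}\alpha_{is}\cdot \alpha_{jk},
\]
which is visibly divisible by $\alpha_{jk}$. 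For a type (iii) edge $[ij]\stackrel{\alpha_{ik}}{\text{------}}[kj]$, the cases $\ell\notin\{i,k\}$ are trivial; if $\ell=i$ then the factor $\alpha_{ik}$ already appears in the product defining $\gamma^{\ell}_{ij}$, and if $\ell=k$ then the factor $\alpha_{ki}=-\alpha_{ik}$ already appears in the product defining $\gamma^{\ell}_{kj}$. So divisibility holds in all cases.

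\medskip

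\noindent\emph{Part (ii).} This is a direct substitution. The $ij$-component of $\widehat\sigma\gamma_{\ell}$ is
\[
\sigma\gamma^{\ell}_{\sigma^{-1}(i)\sigma^{-1}(j)}
=\delta_{\sigma^{-1}(i),\ell}\cdot {\textstyle \prod_{s\neq \sigma^{-1}(i),\sigma^{-1}(j)}}\sigma(\alpha_{\sigma^{-1}(i)s}).
\]
Since $\sigma$ acts on $T^*$ by permuting the $t_r$'s, one has $\sigma(\alpha_{\sigma^{-1}(i)s})=\alpha_{i\,\sigma(s)}$. Reindexing the product via $s\mapsto\sigma(s)$ and rewriting $\delta_{\sigma^{-1}(i),\ell}=\delta_{i,\sigma(\ell)}$ gives exactly $\gamma^{\sigma(\ell)}_{ij}$.

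\medskip

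\noindent\emph{Part (iii).} Plugging $\gamma_k,\gamma_\ell$ into the Poincar\'e pairing formula, the product $\gamma^{k}_{ij}\gamma^{\ell}_{ij}$ carries the factor $\delta_{i,k}\delta_{i,\ell}$, which forces the sum to vanish unless $k=\ell$ and collapses the $i$-summation to $i=k$ in that case. For $k=\ell$ this leaves
\[
\langle \gamma_k,\gamma_k\rangle_{Y_L}
=\sum_{j\neq k}\;\prod_{s\neq k,j}\frac{\alpha_{ks}}{\alpha_{sj}}
=(-1)^{n-2}\sum_{j\neq k}\;\prod_{s\neq k,j}\frac{t_k-t_s}{t_j-t_s},
\]
where I pulled out the sign $(-1)^{n-2}$ coming from flipping $(x_s-x_j)$ to $(x_j-x_s)$ in each of the $n-2$ denominator factors. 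Setting $u=t_k$ and letting $s,j$ range over $\{1,\ldots,n\}\setminus\{k\}$, the inner product is precisely the Lagrange basis polynomial $L_j(u)$ based at the nodes $\{t_s:s\neq k\}$, so the sum is the value at $u=t_k$ of Lagrange interpolation of the constant function $1$, hence equals $1$. This gives $\langle\gamma_k,\gamma_k\rangle_{Y_L}=(-1)^{n-2}$, finishing the proof.

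\medskip

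The only step that is not a direct manipulation is (iii), and the main obstacle (really a small trick rather than a genuine difficulty) is recognizing the rational function above as a sum of Lagrange basis polynomials; once that is seen, its value $1$ is automatic.
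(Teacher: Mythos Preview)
Your proof is correct and follows essentially the same approach as the paper's: parts (i) and (ii) are identical direct verifications, and part (iii) is the same Lagrange-interpolation argument, only packaged slightly differently---you recognize the sum directly as $\sum_{j}L_j(t_k)\equiv 1$, whereas the paper treats the sum as a degree $\le n-2$ polynomial in $t_\ell$ and evaluates it at the $n-1$ points $t_i$, $i\neq\ell$, to conclude it is the constant $(-1)^{n-2}$.
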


\begin{ex}
For $n=5$ there are $20$ vertices which can be distributed as vertices of a regular dodecahedron. Two types of edges then form two families (compounds) of $5$ tetrahedra:
\[    
\href{https://cubicbear.github.io/Compounds}
{\includegraphics[width=0.5\linewidth]{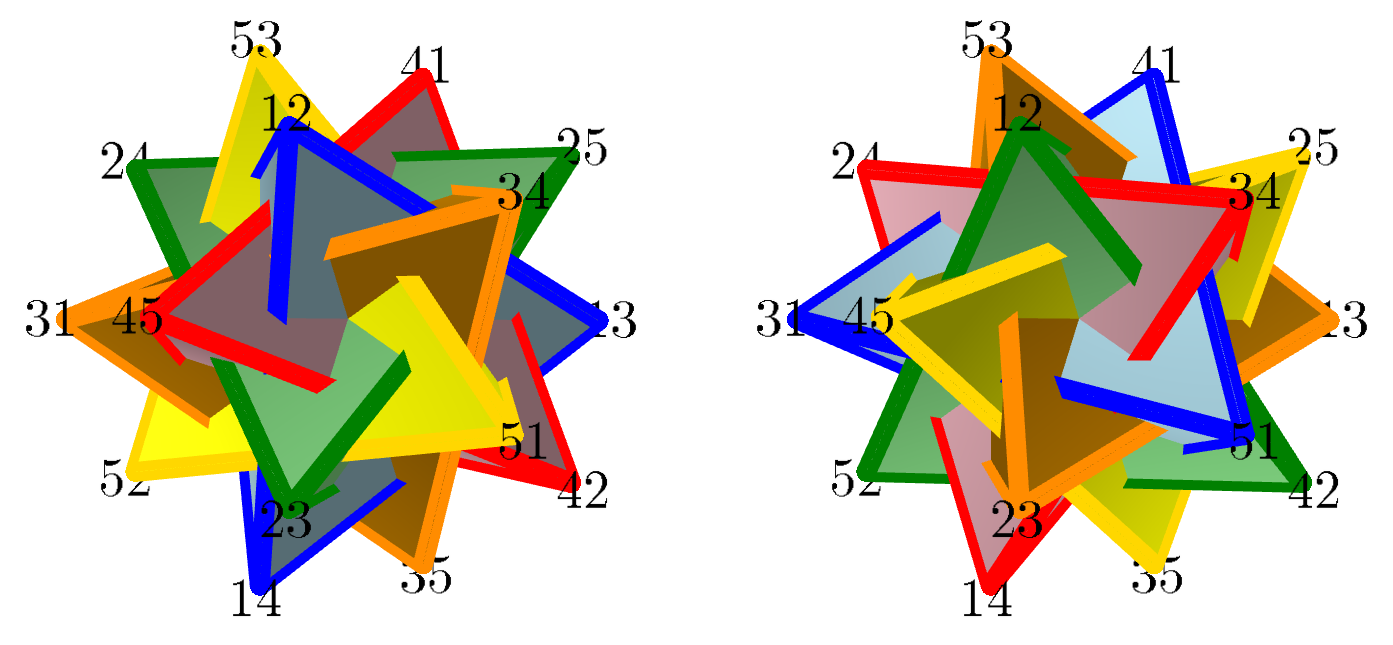}}
\]
Each $\gamma^{\ell}$, $1\leq \ell\leq 5$ is supported over a single tetrahedron on the left diagram, and the monodromy action is given by rotations. 
\end{ex}

\begin{proof}
(i) For the edge $[ij]\stackrel{\alpha_{jk}}{\text{---}}[ik]$ we have 
\[
\alpha_{jk}\mid \delta_{i,\ell}\Big({\textstyle \prod}_{s\neq i,j}{\alpha_{is}}-{\textstyle \prod}_{s\neq i,k}{\alpha_{is}}\Big)
=\delta_{i,\ell} (\alpha_{ik}-\alpha_{ij}){\textstyle \prod}_{s\neq i,j,k}\alpha_{is}=\delta_{i,\ell} \alpha_{jk}{\textstyle \prod}_{s\neq i,j,k}\alpha_{is}.
\]
For the edge $[ij]\stackrel{\alpha_{ik}}{\text{---}}[kj]$ we have 
$\alpha_{ik}\mid \delta_{i\ell}\prod_{s\neq i,j}\alpha_{is}-\delta_{k\ell}\prod_{s\neq k,j}\alpha_{ks}$, since $\alpha_{ik}$ divides both terms. 

(ii) We have
$\sigma\gamma^{\ell}_{\sigma^{-1}(i)\sigma^{-1}(j)}=\delta_{\sigma^{-1}(i),\ell}\prod_{\sigma(s)\neq i,j}\alpha_{i\sigma(s)}=\gamma^{\sigma(\ell)}_{ij}$.

(iii) Since $\gamma_k\gamma_{\ell}=0$ for $k\neq \ell$, we may assume $k=\ell$. 
By \eqref{eq:poinc} we have
\begin{align*}
\langle \gamma_{\ell},\gamma_{\ell}\rangle_{Y_L}
&=
\sum_{j\neq \ell}\frac{\prod_{s\neq \ell,j}\alpha_{\ell s}^2}{\prod_{s\neq \ell,j}\alpha_{\ell s}\alpha_{sj}}
=
\sum_{j\neq \ell}\frac{\prod_{s\neq \ell,j}\alpha_{\ell s}}{\prod_{s\neq \ell,j}\alpha_{sj}}.
\end{align*}
Consider this expression as a polynomial $f(t)$ in $t=t_{\ell}$ of degree at most $n-2$ with coefficients in $\ZZ(t_1,\ldots,\widehat{t_\ell},\ldots,t_n)$. 
Observe that for each $i\neq \ell$ we have
$$f(t_i)=\sum_{j\neq \ell}\frac{\prod_{s\neq \ell,j}\alpha_{i s}}{\prod_{s\neq \ell,j}\alpha_{sj}}
=\frac{\prod_{s\neq \ell,i}\alpha_{is}}{\prod_{s\neq \ell,i}\alpha_{si}}=(-1)^{n-2}.
$$
Then by the Lagrange interpolation $f(t)=(-1)^{n-2}$. 
\end{proof}

Let $\overline{\gamma}_\ell$ denote the image of $\gamma_\ell$ in $\CH^{n-2}(Y_L)$. We set
\[
f =\overline{\gamma}_1\in \CH^{n-2}(Y\times \Spec L)\quad \text{ and }\quad
g = (-1)^{n-2}\overline{\gamma}_1\in \CH^{n-2}(\Spec L \times Y).
\]

\begin{prop}\label{prop:art}
The composite $p=g\circ f$ is an idempotent in $\CH(Y\times Y)$, and $(Y,p)\simeq \MM(\Spec L)(n-2)$. 

Moreover, the motive $(Y,p)$ splits over $L$ into a direct sum of $n$ shifted Tate motives $\ZZ(n-2)$
each supported at $(-1)^{n-2}\overline{\gamma}_\ell$, $1\le \ell \le n$.
\end{prop}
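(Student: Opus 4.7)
The plan is to apply Corollary~\ref{cor:circ} to reduce the idempotent condition for $p=g\circ f$ to a Poincar\'e pairing computation on $Y_L$, which is then supplied directly by Lemma~\ref{lem:gammad}. Once $f\circ g=\Delta_{\Spec L}$ is verified, the standard splitting-of-idempotents argument makes $f$ and $g$ into mutually inverse morphisms between $(Y,p)$ and $\MM(\Spec L)(n-2)$, yielding the first assertion.

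Concretely, applying Corollary~\ref{cor:circ} with $Z=Y$, $\gamma_1=g$, $\gamma_2=f$, it suffices to show $\langle g,\sigma f\rangle_{Y_L}=\delta_{\id,\sigma}$ for all $\sigma\in\Ga_L$. Since $\sigma$ acts on $\CH^{n-2}(Y_L)$ via the monodromy $\widehat{\sigma}$ of Proposition~\ref{thm:mainmon}, Lemma~\ref{lem:gammad}(ii) gives $\sigma f=\overline{\gamma}_{\sigma(1)}$, and Lemma~\ref{lem:gammad}(iii) then yields
\[
\langle g,\sigma f\rangle_{Y_L}
=(-1)^{n-2}\langle\overline{\gamma}_1,\overline{\gamma}_{\sigma(1)}\rangle_{Y_L}
=(-1)^{n-2}\cdot(-1)^{n-2}\,\delta_{1,\sigma(1)}
=\delta_{1,\sigma(1)}.
\]
Because $\eta$ acts on $\{1,\ldots,n\}$ as the cyclic permutation $(1,2,\ldots,n)$ by the corollary preceding Proposition~\ref{thm:mainmon}, the map $\sigma\mapsto\sigma(1)$ is a bijection $\Ga_L\xrightarrow{\sim}\{1,\ldots,n\}$, hence $\delta_{1,\sigma(1)}=\delta_{\id,\sigma}$, as required.

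For the decomposition over $L$, we use $\MM(\Spec L)(n-2)_L=\bigoplus_{\sigma\in\Ga_L}\ZZ(n-2)$ since $L\otimes_F L\simeq\prod_\sigma {}^\sigma L$. By the restriction formula~\eqref{eq:restr}, the base changes of $f$ and $g$ decompose into $\sigma$-components $\sigma f=\overline{\gamma}_{\sigma(1)}$ and $\sigma g=(-1)^{n-2}\overline{\gamma}_{\sigma(1)}$. Reindexing via the bijection $\sigma\mapsto\ell=\sigma(1)$, we obtain $n$ pairs $(f_\ell,g_\ell)=(\overline{\gamma}_\ell,(-1)^{n-2}\overline{\gamma}_\ell)$ satisfying $f_k\circ g_\ell=\delta_{k,\ell}\Delta_{\Spec L}$ by Lemma~\ref{lem:gammad}(iii). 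The orthogonal idempotents $p_\ell=g_\ell\circ f_\ell$ thus sum to $p_L$, and each cuts out a Tate summand $\ZZ(n-2)$ supported at $(-1)^{n-2}\overline{\gamma}_\ell$.

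The expected obstacle is bookkeeping rather than substance: one must carefully distinguish the two roles of $\CH^{n-2}(Y_L)$, as $\CH^{n-2}(Y\times_F\Spec L)$ (correspondences over $F$) and as the Chow group of $Y_L$ equipped with its $\Ga_L$-action via the second factor, and exploit the identification of the Galois action with $\widehat{\sigma}$ provided by Proposition~\ref{thm:mainmon}. The key nontrivial input is Lemma~\ref{lem:gammad}(iii), whose precise sign $(-1)^{n-2}$ obtained by Lagrange interpolation makes both the idempotent relation $f\circ g=\Delta_{\Spec L}$ and the mutual orthogonality of the $p_\ell$ over $L$ line up correctly.
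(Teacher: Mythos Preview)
Your proof is correct and follows essentially the same route as the paper: invoke Corollary~\ref{cor:circ} together with Lemma~\ref{lem:gammad}(ii),(iii) to obtain $f\circ g=\Delta_{\Spec L}$, and then use Lemma~\ref{lem:corrg} (which you unpack via~\eqref{eq:restr}) to decompose $p_L$ into the orthogonal idempotents $(-1)^{n-2}\overline{\gamma}_\ell\boxtimes\overline{\gamma}_\ell$. Your write-up is more explicit than the paper's three-line argument---in particular you spell out the bijection $\sigma\mapsto\sigma(1)$ and the passage from the equivariant monodromy $\widehat{\sigma}$ to the Galois action on $\CH^{n-2}(Y_L)$---but the substance is identical.
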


\begin{proof}
By Lemma~\ref{lem:gammad} and Corollary~\ref{cor:circ} we obtain that $f\circ g=\Delta_{\Spec L}$ which implies the first part of the theorem. As for the second part observe that by Lemma~\ref{lem:corrg} we also have
\[
\res_{L/F}(g\circ f)=\sum_{\ell=1}^{n} (-1)^{n-2} \overline{\gamma}_\ell\boxtimes \overline{\gamma}_\ell,
\]
and by Lemma~\ref{lem:gammad} each $(-1)^{n-2} \overline{\gamma}_\ell\boxtimes \overline{\gamma}_\ell$ is an idempotent in $\CH(Y_L\times_L Y_L)$.
\end{proof}

\subsection{\it Orthogonality to the Severi-Brauer part} 
Recall that there is a $T_L$-equivariant line bundle $\LL_L$ over $X_L$ with $H_L=c_1(\LL_L)$. 
Let $\overline{H}=\imath^*(H_L)\in \CH^1(Y_L)$ denote its restriction to $Y_L$.
Recall also that $X_L$ is a $T_L$-equivariant projective bundle $\PP(\FF^\vee_L)$ over $\PP^{n-1}_L$.
Let $\overline{h}=\imath^*h \in \CH^1(Y_L)$ denote the restriction of the pull-back $h\in \CH^1(X_L)$ of the class of a $T$-equivariant hyperplane section in $\PP^{n-1}$.

Applying Manin's projective bundle theorem to the tower of two projective bundles $X_L/\PP^{n-1}_L$, we obtain

\begin{lem}\label{lem:orth}
The image of each $\overline{g}_j$ in $\CH^{n-2}(Y_L)$ is a free $\ZZ$-module with basis 
$\{\overline{h}^{i+1}\overline{H}^j\}_ {i,j\ge 0,\, i+j=n-3}$. Moreover, we have 
\[
\langle \overline{h}^{i+1}\overline{H}^j,\overline{h}^{i'+1}\overline{H}^{j'}\rangle_{Y_L} =\delta_{i+i',n-3}.
\]
\end{lem}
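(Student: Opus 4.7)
The plan is to attack the two parts of the lemma separately. For the basis description, I would start by computing $\overline{g}_j$ explicitly on Chow groups. From the construction $g_j = c_{H^j} \circ f_{n-2}^t$ in Section~\ref{sec:SB}, together with the fact that $f_{n-2}^t$ represents the pullback along $\pi\colon X \to \SB(A)$ and $c_{H^j}$ represents multiplication by $H^j$, the action of $g_j$ on Chow groups is $\alpha \mapsto H^j \cdot \pi^*(\alpha)$. Restricting along $\imath\colon Y \hookrightarrow X$ yields $\overline{g}_j(\alpha) = \overline{H}^j \cdot \overline{\pi}^*(\alpha)$ with $\overline{\pi} = \pi\circ\imath$. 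Since $\CH^{n-2-j}(\SB(A)_L)$ is freely generated by $h^{n-2-j}$, the image is $\overline{h}^{n-2-j}\overline{H}^j$, which with the substitution $i = n-3-j$ recovers the claimed element $\overline{h}^{i+1}\overline{H}^j$; ranging $j$ over $\{0,1,\ldots,n-3\}$ gives the entire family.

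For the pairing, the self-intersection formula does most of the work. Combining the earlier identity $\imath_*(1) = [Y] = H$ with the projection formula gives
\[
\langle \overline{h}^{i+1}\overline{H}^j,\, \overline{h}^{i'+1}\overline{H}^{j'}\rangle_{Y_L}
\;=\; \int_{X_L} h^{i+i'+2}\, H^{j+j'+1}.
\]
Next I would apply Manin's projective bundle theorem for $\pi\colon X_L \to \PP^{n-1}_L$ to push this integral down to $\PP^{n-1}_L$, using $\pi_*(H^b) = s_{b-n+2}(\FF^\vee_L)$, which vanishes for $b < n-2$. To compute the Segre classes, I would use the description $\FF^\vee_L = \OO(-1)\otimes(\OO^{\oplus n}/\OO(-1))^\vee$: the Euler sequence on $\PP^{n-1}$ and the splitting principle together yield $c(\FF^\vee_L) = (1-h)^n$ modulo $h^n$, so $s_k(\FF^\vee_L) = \binom{n-1+k}{k}h^k$.

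The step I expect to be the main obstacle is the concluding bookkeeping, confirming that under the constraint $i+j = i'+j' = n-3$ the resulting integral indeed collapses to $\delta_{i+i',\,n-3}$. The two easy regimes are $i+i' > n-3$, in which the exponent $b = j+j'+1$ drops below $n-2$ and the pushforward vanishes automatically, and $i+i' = n-3$, where the integral reduces to the single computation $\int_{\PP^{n-1}_L} h^{n-1} = 1$. Once the pairing formula is in hand, freeness of the asserted basis is immediate, since each basis element $\overline{h}^{i+1}\overline{H}^j$ admits a partner $\overline{h}^{j+1}\overline{H}^i$ pairing to $1$ and therefore cannot vanish.
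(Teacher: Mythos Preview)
Your route is exactly the paper's---both invoke Manin's projective bundle theorem for the tower $X_L\to\PP^{n-1}_L$---but you actually carry out the computation, whereas the paper's ``proof'' is just the sentence preceding the lemma. Your treatment of the basis description and of the pairing in the two regimes $i+i'\ge n-3$ is correct.

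The obstacle you flag in the remaining regime $i+i'<n-3$ is real, and in fact it cannot be overcome: the displayed pairing formula is \emph{false} there. Embedding $X_L\subset\PP^{n-1}_L\times\PP^{n-1}_L$ with hyperplane classes $h,h'$, one has $H=h+h'$ and $[X_L]=h+h'$, and your reduction gives
\[
\langle \overline{h}^{i+1}\overline{H}^j,\overline{h}^{i'+1}\overline{H}^{j'}\rangle_{Y_L}
=\int_{\PP^{n-1}\times\PP^{n-1}}h^{\,i+i'+2}(h+h')^{\,j+j'+2}
=\binom{j+j'+2}{\,n-3-i-i'\,}.
\]
For $n=4$, $i=i'=0$, $j=j'=1$ this is $\binom{4}{1}=4$, not $\delta_{0,1}=0$. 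So the lemma as stated is slightly off.

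What your two ``easy regimes'' \emph{do} establish is that the pairing matrix, with rows and columns ordered by increasing $i$, is anti-triangular with $1$'s on the anti-diagonal; in particular its determinant is $\pm 1$. That is all the paper actually uses: the Corollary following Lemma~\ref{lem:ortg} needs only unimodularity, and Proposition~\ref{prop:orth} needs only the first part of the lemma (the description of the image of $\overline{g}_j$). So your argument already proves everything that is required downstream; it is the displayed equality in the lemma that should be weakened, e.g.\ to the binomial formula above or to the statement that the pairing matrix is unipotent anti-triangular.
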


\begin{lem}\label{lem:ortg}
For any $1\leq \ell\leq n$ and $\gamma\in \CH^{n-3}(Y_L)$, we have $\langle \overline{\gamma}_\ell, \overline{h}\gamma\rangle_{Y_L}=0$.
\end{lem}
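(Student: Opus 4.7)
The plan is to realize $\overline{\gamma}_\ell$ as the fundamental class of an explicit subvariety of $Y_L$ that is contracted by the projection defining $\overline{h}$, so that the pairing vanishes by the projection formula. Let $\pi\colon X_L\to \SB(A)_L\simeq \PP^{n-1}_L$ be the projection $(I_1\subset I_{n-1})\mapsto I_1$, and set $Z_\ell:=\pi^{-1}(\Span(e_\ell))\simeq \PP^{n-2}$. Since $\rho(u)$ is diagonal with eigenvalue $\zeta^{\ell-1}\sqrt[n]{a}$ at $e_\ell$, we have $u\Span(e_\ell)=\Span(e_\ell)$, so the defining condition $uV_1\subset V_{n-1}$ of $Y_L$ is automatic when $V_1=\Span(e_\ell)$; hence $Z_\ell\subset Y_L$. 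Let $\iota_\ell\colon Z_\ell\hookrightarrow Y_L$ denote the closed embedding; it has codimension $n-2$.

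The key step is to verify that $[Z_\ell]^{T_L}=\gamma_\ell$ in $\CH^{n-2}_{T_L}(Y_L)$, by comparing restrictions to $T_L$-fixed points through the injection of the first lemma of Section~\ref{sec:Artin}. At $[ij]$ with $i\neq\ell$ both sides vanish, since $[ij]\notin Z_\ell$. At $[\ell j]$ the restriction of $[Z_\ell]^{T_L}$ equals the equivariant Euler class of $N_{[\ell j]}(Z_\ell\subset Y_L)$. Among the $T$-invariant curves through $[\ell j]$ contained in $Y_L$, the type~(ii) curves (which only change $V_{n-1}$) lie in $Z_\ell$, while the type~(iii) curves (which change $V_1$ to a neighbouring fixed point of $\PP^{n-1}$) do not; the normal weights are therefore $\alpha_{\ell s}$ for $s\neq \ell,j$, and their product is exactly $\gamma^\ell_{\ell j}$. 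Passing to ordinary Chow via the augmentation $\Sym_\ZZ T^*\to \ZZ$, this yields $\overline{\gamma}_\ell=(\iota_\ell)_*(1)$.

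Once this is established, the conclusion is immediate from the projection formula:
\[
\langle \overline{\gamma}_\ell,\overline{h}\gamma\rangle_{Y_L}
= p_*\bigl((\iota_\ell)_*(\iota_\ell^*\overline{h}\cdot \iota_\ell^*\gamma)\bigr),
\]
together with the observation that $\overline{h}$ is pulled back from $\PP^{n-1}_L$ along the composition $Y_L\hookrightarrow X_L\to \PP^{n-1}_L$, and that $Z_\ell$ is a single fiber of this composition, so $\iota_\ell^*\overline{h}=0$. The main obstacle is the identification $[Z_\ell]^{T_L}=\gamma_\ell$ of the second paragraph: it hinges on correctly distinguishing which of the three families of $T$-stable curves in $X_0$ survive in $Y_L$ and which further remain inside the contracted fiber $Z_\ell$.
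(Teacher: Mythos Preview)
Your argument is correct. The identification $[Z_\ell]^{T_L}=\gamma_\ell$ is right: at $[\ell j]$ the type~(ii) curves (weights $\alpha_{sj}$, $s\neq\ell,j$) span the tangent of $Z_\ell$, while the type~(iii) curves (weights $\alpha_{\ell s}$, $s\neq\ell,j$) span the normal, so the equivariant Euler class of the normal bundle is exactly $\prod_{s\neq\ell,j}\alpha_{\ell s}=\gamma^\ell_{\ell j}$. The projection-formula step is then clean, since $Z_\ell$ maps to a point in $\PP^{n-1}_L$ and hence $\iota_\ell^*\overline{h}=0$.

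The paper, however, does not introduce the subvariety $Z_\ell$ at all. It stays entirely inside the localization description: choosing the equivariant lift $h'=(t_i-t_1)_{ij}$ of $\overline{h}$, it observes that on the support of $\gamma_\ell$ (the fixed points $[\ell j]$) one has $h'_{\ell j}=t_\ell-t_1$, a constant in $\Sym_\ZZ T^*$. Hence $h'\gamma_\ell=(t_\ell-t_1)\gamma_\ell$, and the equivariant pairing becomes $(t_\ell-t_1)\langle\gamma_\ell,\gamma'\rangle_{Y_L}$, which vanishes for degree reasons ($\langle\gamma_\ell,\gamma'\rangle$ would have to live in $\CH_T^{-1}(\mathrm{pt})=0$). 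This is a one-line computation once the equivariant framework is in place. Your approach is longer but has the payoff of giving a concrete geometric meaning to the classes $\overline{\gamma}_\ell$ as fiber classes of $\pi|_{Y_L}$; the paper's approach is the algebraic shadow of yours, since ``$h'$ is constant on the support of $\gamma_\ell$'' is precisely the equivariant expression of ``$\overline{h}$ pulls back to zero on $Z_\ell$''.
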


\begin{proof} Consider liftings $\gamma_\ell$ of $\overline{\gamma}_\ell$, $\gamma'$ of $\gamma$ and $h'=(t_i-t_1)_{ij}$ of 
$\overline{h}$ to  $\CH_{T_L}(Y_L)$. We then have
$
\langle \gamma_\ell,h'\gamma'\rangle_{Y_L}=
\langle h'\gamma_\ell,\gamma'\rangle_{Y_L}=
\langle (t_\ell-t_1)\gamma_\ell,\gamma'\rangle_{Y_L}=
(t_\ell-t_1)
\langle \gamma_\ell,\gamma'\rangle_{Y_L}=0
$.
\end{proof}

\begin{cor}
The classes $\{\overline{\gamma}_\ell\}_{1\leq \ell\leq n}$ and $\{\overline{h}^{i+1}\overline{H}^{j}\}_{i,j\geq 0,\,i+j=n-3}$
form a basis of $\CH^{n-2}(Y_L)$. 
\end{cor}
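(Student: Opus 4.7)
The plan is to assemble the basis by combining the direct-sum decomposition \eqref{eq:MYE} with Proposition~\ref{prop:art}. After base change to $L$, these give
\[
\MM(Y_L) \simeq \bigoplus_{i=0}^{n-3} \MM(\SB(A))_L(i) \oplus \ZZ(n-2)^{\oplus n},
\]
so $\CH^{n-2}(Y_L)$ splits as a Severi--Brauer part of rank $n-2$ (one copy of $\ZZ$ in degree $n-2$ per shift $i\in\{0,\ldots,n-3\}$, using $\SB(A)_L\simeq \PP^{n-1}_L$) plus an Artin part of rank $n$, for a total rank of $2n-2$. Hence it suffices to verify that the proposed $2n-2$ classes are $\ZZ$-linearly independent and that their Poincar\'e Gram matrix is unimodular.

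For the Gram matrix, I would assemble the three ingredients already at hand. Lemma~\ref{lem:gammad}(iii) gives the $\overline{\gamma}$-block as $(-1)^{n-2}I_n$; Lemma~\ref{lem:orth} gives the $\{\overline{h}^{i+1}\overline{H}^j\}$-block as an antidiagonal permutation matrix of $\pm 1$'s; and Lemma~\ref{lem:ortg} kills every cross term, since $\overline{h}^{i+1}\overline{H}^j = \overline{h}\cdot(\overline{h}^{i}\overline{H}^j)$ lies in $\overline{h}\cdot\CH^{n-3}(Y_L)$. The full Gram matrix is therefore block-diagonal with determinant $\pm 1$; in particular the $2n-2$ classes are $\ZZ$-linearly independent.

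Finally, since $\CH^{n-2}(Y_L)$ is free abelian of rank $2n-2$ and our $2n-2$ independent classes span a sublattice whose Gram matrix under the Poincar\'e pairing is unimodular, they must form a $\ZZ$-basis rather than a finite-index sublattice. The main obstacle I anticipate is precisely this integral bookkeeping: one must not lose an index factor when passing from $\QQ$-linear independence to an actual $\ZZ$-basis. Lemma~\ref{lem:ortg} is the key step, as it produces the block-orthogonal decomposition which forces the proposed family to saturate $\CH^{n-2}(Y_L)$, and not merely a proper sublattice.
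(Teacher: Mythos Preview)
Your Gram-matrix argument is exactly what the paper does: Lemma~\ref{lem:gammad}(iii), Lemma~\ref{lem:orth}, and Lemma~\ref{lem:ortg} assemble into a block matrix of determinant $\pm 1$, and then unimodularity promotes $\ZZ$-linear independence to a $\ZZ$-basis once the rank is known. That part is fine.

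The gap is in your rank computation. You claim that \eqref{eq:MYE} together with Proposition~\ref{prop:art} yields
\[
\MM(Y_L)\simeq \bigoplus_{i=0}^{n-3}\MM(\SB(A))_L(i)\oplus \ZZ(n-2)^{\oplus n},
\]
but at this point in the paper this is not known: \eqref{eq:MYE} gives the Severi--Brauer summands together with the complementary idempotent $\bar p$, while Proposition~\ref{prop:art} produces a \emph{different} idempotent $p$ with $(Y,p)\simeq \MM(\Spec L)(n-2)$. Nothing so far tells you that $p=\bar p$ (even over $L$); that identification is precisely Proposition~\ref{prop:orth}, whose proof (``comparing the respective ranks'') in turn relies on already knowing $\operatorname{rank}\CH^{n-2}(Y_L)=2n-2$. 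So your derivation of the rank is circular: you are invoking the main theorem to prove a lemma that feeds into it.

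The paper sidesteps this by importing the rank from an external source, \cite[Prop.~3.19]{BP22}, which computes the Betti numbers of the hyperplane section directly. If you want to avoid that citation, you would need an independent upper bound on $\operatorname{rank}\CH^{n-2}(Y_L)$ (e.g.\ via the weak Lefschetz theorem for $Y_L\subset (X_0)_L$, or a direct cell count), not the motivic decomposition you are ultimately trying to establish.
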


\begin{proof}
By \cite[Prop.3.19]{BP22} the rank of $\CH^{n-2}(Y_L)$ coincides with the total number of classes that is $2n-2$. 
Since, the pairing $\langle \text{-},\text{-}\rangle_{Y_L}$ on these classes is non-degenerate with determinant $\pm 1$, the result follows.
\end{proof}

\begin{prop}\label{prop:orth}
The idempotent $p$ is orthogonal to each $\overline{p}_j=\overline{g}_j\circ \overline{f}_{j+1}$, $j=0\ldots n-3$ from \eqref{eq:MYE}, and $p=\bar p$ over $L$.
\end{prop}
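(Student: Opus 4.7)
The plan splits the proposition into two claims: (I)~the orthogonality relations $p\circ \overline p_j = 0 = \overline p_j\circ p$ in $\CH(Y\times Y)$ for each $0\le j\le n-3$, and (II)~the identity $p_L = \overline p_L$. For (I), the key reduction is that $f\circ g = \Delta_{\Spec L}$ (Proposition~\ref{prop:art}) and $\overline f_{j+1}\circ \overline g_j = \Delta_{\SB(A)}$ make $g$ and $\overline g_j$ split inclusions. Writing $p\circ \overline p_j = g\circ (f\circ \overline g_j)\circ \overline f_{j+1}$ and then composing on the left with $f$ and on the right with $\overline g_j$ yields the equivalence $p\circ \overline p_j = 0 \Leftrightarrow f\circ \overline g_j = 0$; the symmetric calculation gives $\overline p_j\circ p = 0 \Leftrightarrow \overline f_{j+1}\circ g = 0$.

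To kill $f\circ \overline g_j$, I apply $\res_{L/F}$ using Lemma~\ref{lem:corrg}: its $\sigma$-component is the composition $\overline\gamma_{\sigma(1)}\circ (\overline g_j)_L$. Since $g_j(\alpha) = H^j\pi^*\alpha$ on $X_L$, we have $(\overline g_j)_L(h^m) = \overline H^j\overline h^m$ on $Y_L$, so this operator sends $h^m \in \CH^m(\SB(A)_L)$ to $\langle \overline\gamma_{\sigma(1)}, \overline H^j\overline h^m\rangle_{Y_L}$. For the pairing to be nonzero on dimensional grounds one needs $m = n-2-j \ge 1$, and then the $\overline h$ factor lets Lemma~\ref{lem:ortg} apply. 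Because the Galois action on $\CH^*(\SB(A)_L) = \ZZ[h]/h^n$ is trivial, the $\id$-component of $\res$ recovers any cycle, so $f\circ \overline g_j = 0$ already in $\CH(\SB(A)\times\Spec L)$.

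The argument for $\overline f_{j+1}\circ g = 0$ is dual. Its $\sigma$-component lies in $\CH^{n-2-j}(\SB(A)_L) = \ZZ\cdot h^{n-2-j}$, and by adjointness of $\overline f_{j+1}$ and its transpose the coefficient equals $\langle \overline f_{j+1}^t(h^{j+1}), \overline\gamma_\ell\rangle_{Y_L}$. Here $\overline f_{j+1}^t$ is the restriction to $\SB(A)\times Y$ of $f_{j+1}^t\in \CH(\SB(A)\times X)$; the downward Manin recursion, starting from $f_{n-2}^t = \pi^*$ and proceeding by multiplications with $H$ and corrections pulled back via $\pi$, shows every term of $f_{j+1}^t(h^{j+1})\in\CH(X_L)$ retains the factor $\pi^*(h^{j+1}) = h^{j+1}$. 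Restriction to $Y_L$ leaves a factor of $\overline h^{j+1}$, Lemma~\ref{lem:ortg} kills the pairing, and injectivity of restriction yields $\overline f_{j+1}\circ g = 0$ over $F$.

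For (II), orthogonality from (I) makes $q := p + \sum_{j=0}^{n-3}\overline p_j$ an idempotent. Since $\MM(Y_L)$ is a direct sum of Tate motives, $\End(\MM(Y_L))$ embeds into $\bigoplus_d \End_\ZZ(\CH^d(Y_L))$, so it suffices to check that $q_L$ acts as the identity on each $\CH^d(Y_L)$. In codimension $n-2$ the basis $\{\overline h^{i+1}\overline H^j\}_{i+j=n-3} \cup \{\overline\gamma_\ell\}_\ell$ of $\CH^{n-2}(Y_L)$ from the corollary above splits cleanly: each $\overline h^{i+1}\overline H^j$ is fixed by the single matching $\overline p_{j,L}$ (via $\overline f_{j+1}\overline g_j = \Delta_{\SB(A)}$) and killed by $p_L$ and the other Severi-Brauer projectors, while each $\overline\gamma_\ell$ is fixed by $p_L$ (from the formula $p_L = \sum_\ell(-1)^{n-2}\overline\gamma_\ell\boxtimes\overline\gamma_\ell$ of Proposition~\ref{prop:art} together with Lemma~\ref{lem:gammad}(iii)) and killed by each $\overline p_{j,L}$ by the preceding paragraph. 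In every other codimension $p_L$ vanishes for degree reasons and $\sum\overline p_{j,L}$ exhausts $\CH^*(Y_L)$ by the rank count of \cite[Prop.~3.19]{BP22}. Hence $q_L = \Delta_{Y_L}$ and $\overline p_L = p_L$. The main obstacle I expect is the structural claim for $f_{j+1}^t(h^{j+1})$ in the previous paragraph: unwinding the Manin recursion and tracking the Chern-class corrections carefully to confirm that $\pi^*(h^{j+1})$ factors through every term is the one step that uses the explicit projective bundle geometry of $\pi\colon X\to\SB(A)$ in an essential way.
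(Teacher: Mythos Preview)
Your argument is correct and rests on the same key ingredient as the paper's own proof, namely Lemma~\ref{lem:ortg} (the vanishing $\langle\overline\gamma_\ell,\overline h\,\gamma\rangle_{Y_L}=0$), followed by the rank/basis count of the Corollary after Lemma~\ref{lem:ortg}. The paper's version is extremely terse: it just notes $p_*(\overline h^{i+1}\overline H^j)=0$ by Lemma~\ref{lem:ortg}, invokes Lemma~\ref{lem:orth} for the description of the Severi--Brauer images, and says ``compare ranks'' for $p_L=\bar p_L$.

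Where you go beyond the paper is in two respects. First, you treat the two sides $p\circ\overline p_j=0$ and $\overline p_j\circ p=0$ separately; the paper's one-line argument only directly yields the first, and relies on the second being recovered from $p_L=\bar p_L$. Second, by factoring through $f\circ\overline g_j\in\CH(\SB(A)_L)$ and $\overline f_{j+1}\circ g\in\CH(\SB(A)_L)$ and using that the $\id$-component of $\res_{L/F}$ is the identity, you actually obtain orthogonality in $\CH(Y\times Y)$ over~$F$, whereas the paper works only over~$L$ (which is all that is needed for the main theorem, stated modulo phantoms). Your flagged obstacle is real but does go through: transposing the recursion gives $(f_i^t)_*(\alpha)=H^{n-2-i}\pi^*\alpha-\sum_{k>i}(f_k^t)_*\bigl(\alpha\cdot\pi_*H^{k+n-2-i}\bigr)$, so by downward induction every $(f_i^t)_*(\alpha)$ lies in $\pi^*(\alpha)\cdot\CH(X_L)$; restricting via $\imath^*$ leaves the factor $\overline h^{j+1}$ and Lemma~\ref{lem:ortg} finishes the job.
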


\begin{proof}
As for the first, by Lemma~\ref{lem:ortg} we have $p_*(\overline{h}^{i+1}\overline{H}^j)=0$. So it then follows by Lemma~\ref{lem:orth}. As for the second, it follows by comparing the respective ranks.
\end{proof}

Finally, combining the decomposition \eqref{eq:MYE} with Propositions~\ref{prop:art} and \ref{prop:orth} we obtain the motivic decomposition theorem of the introduction.


\begin{thebibliography}{99}

\bibitem[Bl10]{Bl10}
Blunk,~M.
Del Pezzo surfaces of degree 6 over an arbitrary field.
{\it J. Algebra} {\bf 323} (2010), 42--58.

\bibitem[BP22]{BP22}
Benedetti,~V.; Perrin,~N. 
Cohomology of hyperplane sections of (co)adjoint varieties. 
{\it arXiv:2207.02089}.

\bibitem[Br97]{Br97}
Brion,~M.
Equivariant Chow groups for torus actions.
{\it Transform. Groups} {\bf 2} (1997), no. 3, 225--267.

\bibitem[Br05]{Br05}
Brosnan,~P.   
On motivic decompositions arising from the method of Bialynicki-Birula. 
{\it Invent. Math.} {\bf 161} (2005), no.1, 91--111.

\bibitem[CPSZ]{CPSZ}
Calmès,~B.; Petrov,~V.; Semenov,~N.; Zainoulline,~K.
Chow motives of twisted flag varieties.
{\it Compos. Math.} {\bf 142} (2006), no. 4, 1063–1080.

\bibitem[CM06]{CM06}
Chernousov,~V.; Merkurjev,~A.
Motivic decomposition of projective homogeneous varieties and the Krull-Schmidt theorem.
{\it Transform. Groups} {\bf 11} (2006), no. 3, 371--386.

\bibitem[EG97]{EG97}
Edidin,~D.; Graham,~W.   
Characteristic classes in the Chow ring. 
{\it J. Algebraic Geom.} {\bf 6} (1997), no.3, 431--443.


\bibitem[Fu98]{Fu98}
Fulton,~W.
Intersection theory.
{\it Results in Mathematics and Related Areas. 3rd Series.} A Series of Modern Surveys in Mathematics.
Springer-Verlag, Berlin, 1998, xiv+470 pp.

\bibitem[Gi15]{Gi15}
Gille,~S.
Permutation modules and Chow motives of geometrically rational surfaces.
{\it J. Algebra} {\bf 440} (2015), 443--463.

\bibitem[GS06]{GS06}
Gille,~P.; Szamuely,~T. 
Central Simple Algebras and Galois Cohomology. 
{\it Cambridge Studies in Advanced Mathematics} {\bf 101},
Cambridge Univ. Press, 2006, 430pp.

\bibitem[Ka12]{Ka12}
Karpenko,~N.
Unitary grassmannians.
{\it J. Pure Appl. Algebra} {\bf 216} (2012), no. 12, 2586--2600.

\bibitem[Ka00]{Ka00}
Karpenko,~N.
Criteria of motivic equivalence for quadratic forms and central simple algebras. 
{\it Math. Ann.} {\bf 317} (2000), no.3, 585--611.

\bibitem[Ka96]{Ka96}
Karpenko,~N.   
The Grothendieck-Chow motifs of Severi-Brauer varieties. 
{\it St. Petersburg Math.~J.} {\bf 7} (1996), no.4, {649--661}.

\bibitem[KMRT]{KMRT}
Knus,~M.-A.; Merkurjev,~A.; Rost,~M.; Tignol,~J.-P.  
The Book of Involutions. 
{\it Colloquium Publications} {\bf 44}, 
AMS, Providence, RI, 1998.

\bibitem[Ko91]{Ko91}
K\"ock,~B.   
Chow motif and higher Chow theory of $G/P$. 
{\it Manuscripta Math.} {\bf 70} (1991), 363--372.

\bibitem[LM07]{LM07}
Levine,~M.; Morel,~F.
Algebraic cobordism.
{\it Springer Monogr. Math.}
Springer, Berlin, 2007, xii+244 pp.

\bibitem[Ma68]{Ma68}
Manin,~Y.   
Correspondences, Motives and Monoidal Transformations. 
{\it Math. USSR Sb.} {\bf 6} (1968), 439--470.

\bibitem[MPW]{MPW1}
Merkurjev,~A.; Panin,~I.; Wadsworth,~A.   
Index reduction formulas for twisted flag varieties. I. 
{\it K-Theory J.} {\bf 10} (1996), 517--596.

\bibitem[Se08]{Se08}
Semenov, N. 
Motivic decomposition of a compactification of a Merkurjev-Suslin variety. 
{\it J. Reine Angew. Math.} {\bf 617} (2008), 153--167.

\bibitem[SZ10]{SZ10}
Semenov,~N.; Zainoulline,~K.
Essential dimension of Hermitian spaces.
{\it Math. Ann.} {\bf 346} (2010), no. 2, 499--503.

\bibitem[Sp09]{Sp09}
Springer, T. A.
Linear algebraic groups.
Mod. Birkhäuser Class.
Birkhäuser Boston, Inc., Boston, MA, 2009, xvi+334 pp

\bibitem[To97]{To97}
Totaro,~B.   
The Chow ring of a classifying space. Algebraic $K$-theory 
(Seattle, WA, 1997), 249--281, {\it Proc. Sympos. Pure Math.} {\bf 67}, Amer. Math. Soc., Providence, RI, 1999.

\end{thebibliography}
\end{document}